\begin{document}

\title{Existence of Weak Efficient Solutions of Set-Valued Optimization Problems
\thanks{The first and the second authors were partially supported by the grants from IPM (Nos. 1401490047, 1402460315).}
}

\titlerunning{Existence of Weak Efficient Solutions of Set-Valued Optimization Problems}        

\author{Fatemeh Fakhar,  Hamid Reza Hajisharifi, Zeinab Soltani }


\institute{
Fatemeh Fakhar \at
Department of Pure Mathematics,
 Faculty of Mathematics and Statistics,
  University of Isfahan, Isfahan 81746-73441, Iran.\\
  School of Mathematics, Institute for Research in Fundamental Sciences (IPM),
 P.O. Box 19395-5746, Tehran, Iran.\\
\email{f.fakhar66@gmail.com}
\and
Hamid Reza Hajisharifi (corresponding author)  \at
 Department of Mathematics, Khansar campus,
  University of Isfahan, Isfahan, Iran.\\
School of Mathematics, Institute for Research in Fundamental Sciences (IPM),
 P.O. Box 19395-5746, Tehran, Iran.\\
              \email{h.r.hajisharifi@khc.ui.ac.ir}           
\and
          Zeinab Soltani \at
 Department of Pure Mathematics,
         University of Kashan,
        Kashan, 87317-53153,  I. R. Iran. \\
         \email{z.soltani@kashanu.ac.ir}
}

\date{Received: date / Accepted: date}

\maketitle

\begin{abstract}
In this paper, we consider a new scalarization function for set-valued maps. As the main goal, by using this scalarization function, we obtain some Weierstrass-type theorems for the noncontinuous set optimization problems via the coercivity and noncoercivity conditions. This contribution improves various existing results in the literature.
\keywords{Set optimization problem \and Coercivity condition \and Noncoercivity condition \and Asymptotic function.}
\subclass{49J53 \and 90C26 \and 90C29 \and 90C48.}
\end{abstract}

 \section{Introduction}
 Set-valued optimization theory has numerous applications in economics, financial mathematics,  medical engineering, optimal control and several other fields; see for example \cite{CHY,CFFH2023,E,J,K0,L} and the references therein.
The vector approach and the set approach are two main approaches to deal with set-valued optimization problems. The vector approach consists in finding the optimal elements for union of values of the objective set-valued map; see for more details \cite{CHY,E,J,K0,L}. 
 The notion of set-valued optimization with the set criterion which has been called set optimization was first introduced by Kuroiwa \cite{K2,K3,K4}. Recently, many researchers studied the existence of solutions for set-valued optimization problems with the set criterion; see, for example \cite{E,HL,J,K0,K3,K4}.\\
 Most of the set optimization problems have been studied under coercivity conditions. However, recently many researchers have studied set optimization problems without any coercivity assumption, via asymptotic analytic tools; see for example \cite{CFFH2023,HL}.

The main motivation of this paper is to obtain two new Weierstrass-type theorems for a noncontinuous set optimization problem via a coercivity condition and asymptotic analytic tools, by using a new scalarization function. For this purpose, in section 3, we introduce a new scalarization function for set-valued maps. In section 4, we extend the notion of regular-global-inf (rgi) functions \cite{A1} to set-valued maps and we study the class of maps that satisfy this condition. In section 5, as the main results, we have two new Weierstrass-type theorems for noncontinuous set optimization problems. In one of them, we assume the coercivity condition but in the other theorem, there is no coercivity condition and we will use the asymptotic analytic tools.

\section{Preliminaries and notation}
 In this section, we give some  notation and results that will be used in this paper.\\
 We denote the set of real numbers (resp. nonnegative real numbers and nonnegative integers) by $\mathbb{R}$ (resp. $\mathbb{R}_+$ and $\mathbb{N}$). Let $K$ be a nonempty subset of a topological space, the closure, the topological interior and the complement of $K$ are denoted by $\mathrm{cl}(K)$, $\mathrm{int} K$ and $K^c$, respectively.\\
We assume that $X$ is a Hausdorff topological space and $Y$ is a Hausdorff topological vector space ordered by a closed convex cone $P\subseteq Y$ such that:
 $$y_1\preccurlyeq y_2 \Leftrightarrow y_2-y_1\in P.$$
 The convex cone $P$ is said to be solid if $\mathrm{int} P\neq \emptyset$. Whenever $P$ is solid, we have
 $$y_1\prec y_2\Leftrightarrow y_2-y_1\in \mathrm{int} P.$$
 Suppose that $\mathcal{P}(Y)$ is the class of all nonempty subsets of $Y$. For $A,B\in \mathcal{P}(Y)$, the lower set less relation $\leq^l $, which was introduced in \cite{K2,K3}, is defined as follows: $A\leq^l B$ if and only if $B\subseteq A+P$. We say that $A\sim^l B$ if and only if $A\leq^l B$ and $B\leq^l A$ or equivalently $A+P=B+P$.
 We say that  $A<^l B$ if and only if $B\subseteq A+\mathrm{int} P$. It is clear that $\leq^l $ is a preorder relation, although $<^l$ is not necessarily a reflexive relation.

 Throughout the paper, suppose that $P$ is solid and proper; i.e. $\{0\}\neq P\neq Y$, $q\in \mathrm{int} P$, $\mathcal{P}(Y)$ is equipped with the preorder relation $\leq^l $ and $F : X\rightrightarrows Y$ is a set-valued map with nonempty values.\\
A general form of the set optimization problem is usually formulated as follows:
 $$\mathrm{(SOP)}\quad \min F(x) \quad \mbox{subject ~~~to ~~~~} x\in X.$$

 \begin{definition}\cite{HL}
 	An element  $\bar{x}\in X$ is said to be
 	\begin{itemize}
 		\item[(i)] a weakly $l$-efficient solution of (SOP) (briefly, denoted by $\bar{x}\in l$-$WEff(F)$), if for each $x\in X$, $F(x)<^l F(\bar{x})$ implies that $F(\bar{x})<^l F(x)$.
 		\item[(ii)] a strict weakly $l$-efficient solution of (SOP) (briefly, denoted by $\bar{x}\in l$-$
 		SWEff(F)$), if there is no $x\in X$ with $x\neq\bar{x}$ such that $F(x)<^l F(\bar{x})$.
 	\end{itemize}
 \end{definition}
A nonempty subset $A$ of $Y$ is said to be $P$-proper if $A+P\neq Y$, $P$-closed if $A+P$ is closed, $P$-bounded if for each neighborhood $U$ of zero in $Y$ there is some $r>0$ such that $A\subseteq rU + P$ and $P$-compact if any cover of $A$ of the form $\{U_{\alpha}+P:\alpha\in I, U_{\alpha}~\mbox{are open}\}$ admits a finite subcover. By \cite{HR07,L}, every $P$-compact set is $P$-closed and $P$-bounded and by \cite{GMMN}, every $P$-bounded set is $P$-proper.\\
Let $\mathcal{H}$ be the property of sets in $Y$, we say that $F$ is $\mathcal{H}$-valued if for each $x\in X$, $F(x)$ has the property $\mathcal{H}$.\\
In \cite[Lemma 2.2]{CFFH2023}, we see that, if $F$ is $P$-closed-valued and $P$-proper-valued, then
$l$-$WEff(F)=l$-$SWEff(F)$.\\
Similarly to \cite{HL}, we introduce the following definition.
 \begin{definition}
 	 The Colevel set of $F$ at height $B\subseteq Y$ is denoted by  $$Colev_{<^l}(F,B):=\{x\in X: B\nless^l F(x)\}.$$
 	 \end{definition}
 For $y\in Y$, we set $Colev_{<^l}(F,y) :=Colev_{<^l}(F,\{y\})$.

   By \cite{LSS}, we say that, $G:\Lambda\rightrightarrows Y$ is transfer closed on $\Lambda$, if
  	$$\bigcap_{\lambda\in \Lambda}\mathrm{cl} (G(\lambda))=\bigcap_{\lambda\in \Lambda}G(\lambda).$$

 Let $f:X\rightarrow \Bbb R\cup\{+\infty\}$ be a function. This function is said to be proper if there exists $x\in X$ such that $f(x)<+\infty$. When $f$ be a proper function we set $\arg \min f:=\{x\in X: f(x)=\inf f\}$ and, for $\lambda\in \Bbb R$, $[f*\lambda]:=\{x\in X: f(x)*\lambda\}$, where $*\in\{\leq,\geq,<,>,=\}$.

\begin{definition}\cite{A1}
The function $f:X\rightarrow \Bbb R\cup \{+\infty\}$ is called
\begin{itemize}
\item[(i)] lower semicontinuous (lsc) at $x_0\in X$ if,
	$$f(x_0)\leq\liminf_{x\to x_0}f(x).$$
\item[(ii)] regular-global-inf (rgi) at $x_0\in X$ if, $f(x_0)>\inf f$ implies that there exist $U\in\mathcal{N}(x_0)$ (neighborhoods of $x_0$) and $c>\inf f$ such that $\inf f(U)>c$.
\item[(iii)] lsc (rgi) on $X$ if and only if it is lsc (rgi) at every $x\in X$.
\end{itemize}
\end{definition}
Notice that, $f:X\rightarrow \Bbb R\cup \{+\infty\}$ is lsc if and only if $[f\leq \lambda]$ is closed for each $\lambda\in\Bbb R$. Also, every lsc function is rgi, but the converse of this result is not necessarily true; (see \cite{A1}).

Now, we recall the following scalarization function $\psi^q_P:Y\rightarrow \Bbb R$, \cite{GL}, which is particular case of the Gerstewitz scalarization function:
$$\psi^q_P(y):=\sup\{t\in\Bbb R: y\in tq+P\}.$$
This function is $\leq$-increasing ($<$-increasing); i.e., $y_1, y_2  \in Y$
and $y_1 \preccurlyeq y_2 $ ($y_1 \prec y_2$) imply $\psi^q_P (y_1)\leq \psi^q_P(y_2)$ ($\psi^q_P(y_1)< \psi^q_P(y_2)$);
subadditive, positively homogeneous and translation-invariant; i.e., $\psi^q_P (y+\alpha q)=\psi^q_P(y)+\alpha$ for all $y \in Y$, $\alpha \in \Bbb R$.\\
In the following, there are some basic properties of $\psi^q_P$, that we will be used in the sequel.
	\begin{proposition}\label{p3.1}
	Let $\lambda\in\Bbb R$, then we have
	\begin{itemize}
	\item[(i)] $y\in \psi^q_P(y)q+P$,
    \item[(ii)] $[\psi^q_{P}>\lambda]=\lambda q+\mathrm{int} P$,
	\item[(iii)] $[\psi^q_{P}\geq \lambda]=\lambda q+P$,
		\item[(iv)] $[\psi^q_{P}=\lambda]=\lambda q+ (P\setminus \mathrm{int} P)$.
	\end{itemize}
\end{proposition}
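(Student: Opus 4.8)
The plan is to first record the (tacitly used) fact that $\psi^q_P$ is finite-valued, and then to prove the four identities in the order (i), (iii), (ii), (iv), since each later one will lean on the earlier ones. Fix $y\in Y$ and set $S_y:=\{t\in\mathbb{R}:y\in tq+P\}$. This set is nonempty: since $q\in\mathrm{int}\,P$ and $y/s\to 0$ as $s\to+\infty$, for $s$ large enough $q+y/s\in\mathrm{int}\,P\subseteq P$, hence $y+sq\in P$, i.e.\ $-s\in S_y$. It is also bounded above: if some $t_n\in S_y$ had $t_n\to+\infty$, then (using that $P$ is a cone and $t_n>0$) $y/t_n-q\in P$, and letting $n\to+\infty$ the closedness of $P$ gives $-q\in P$; combined with $q\in\mathrm{int}\,P$ and $\mathrm{int}\,P+P\subseteq\mathrm{int}\,P$ (valid because $P$ is a convex cone, hence closed under addition) this yields $0\in\mathrm{int}\,P$, forcing $P=Y$ and contradicting properness of $P$. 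Therefore $\psi^q_P(y)=\sup S_y\in\mathbb{R}$, so all four statements are well posed.

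For (i), pick $t_n\in S_y$ with $t_n\uparrow\psi^q_P(y)$; then $y-t_nq\in P$ for every $n$, and since scalar multiplication is continuous and $P$ is closed, passing to the limit gives $y-\psi^q_P(y)q\in P$, which is (i). For (iii): if $y\in\lambda q+P$ then $\lambda\in S_y$, so $\psi^q_P(y)\geq\lambda$; conversely, if $\psi^q_P(y)\geq\lambda$, add the membership $y-\psi^q_P(y)q\in P$ from (i) to $(\psi^q_P(y)-\lambda)q\in P$ (here $\psi^q_P(y)-\lambda\geq 0$ and $q\in P$) to obtain $y-\lambda q\in P$. Hence $[\psi^q_P\geq\lambda]=\lambda q+P$.

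For (ii): if $\psi^q_P(y)>\lambda$, choose $t\in S_y$ with $t>\lambda$ and write $y-\lambda q=(y-tq)+(t-\lambda)q\in P+\mathrm{int}\,P\subseteq\mathrm{int}\,P$, using $q\in\mathrm{int}\,P$ so that $(t-\lambda)q\in\mathrm{int}\,P$. Conversely, if $y-\lambda q\in\mathrm{int}\,P$, then by continuity of $\varepsilon\mapsto y-(\lambda+\varepsilon)q$ at $\varepsilon=0$ there is $\varepsilon>0$ with $y-(\lambda+\varepsilon)q\in\mathrm{int}\,P\subseteq P$, so $\psi^q_P(y)\geq\lambda+\varepsilon>\lambda$. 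This gives $[\psi^q_P>\lambda]=\lambda q+\mathrm{int}\,P$. Finally (iv) follows from $[\psi^q_P=\lambda]=[\psi^q_P\geq\lambda]\setminus[\psi^q_P>\lambda]$: substituting (iii) and (ii) and using that the translation $y\mapsto y+\lambda q$ is a bijection yields $(\lambda q+P)\setminus(\lambda q+\mathrm{int}\,P)=\lambda q+(P\setminus\mathrm{int}\,P)$.

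The only genuinely delicate point is the well-definedness step, namely ruling out $\psi^q_P(y)=+\infty$; this is exactly where properness of $P$ (equivalently $0\notin\mathrm{int}\,P$) together with closedness of $P$ is used. Everything else is a short manipulation of the defining supremum together with the closedness of $P$ and the openness/absorbing behaviour of $\mathrm{int}\,P$. One could alternatively deduce (ii) and (iii) from the translation-invariance and $\leq$-monotonicity of $\psi^q_P$ recorded just before the proposition, but the self-contained argument above is the one I would present.
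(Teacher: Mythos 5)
Your proof is correct, and it is worth noting where it diverges from the paper's. For (i) and (iv) you argue exactly as the paper does (closedness of $P$ along a maximizing sequence $t_n\uparrow\psi^q_P(y)$, and the set difference $[\psi^q_P\geq\lambda]\setminus[\psi^q_P>\lambda]$ respectively). For (ii) the paper simply cites \cite[Proposition 2.1(d)]{GL}, whereas you give a self-contained two-line argument ($P+\mathrm{int}\,P\subseteq\mathrm{int}\,P$ in one direction, openness of $\mathrm{int}\,P$ along $\varepsilon\mapsto y-(\lambda+\varepsilon)q$ in the other); this makes the proposition independent of the reference at essentially no cost. For (iii) the routes genuinely differ: the paper deduces $[\psi^q_P\geq\lambda]\subseteq\lambda q+P$ from (ii) applied at the levels $\lambda-\tfrac1n$ followed by a second appeal to the closedness of $P$, while you deduce it directly from (i) by adding $(\psi^q_P(y)-\lambda)q\in P$ to $y-\psi^q_P(y)q\in P$; your version avoids the extra limit and makes the logical dependence (iii) $\Leftarrow$ (i) rather than (iii) $\Leftarrow$ (ii). Finally, your preliminary verification that $\psi^q_P$ is real-valued (nonemptiness and boundedness above of $S_y$, using $q\in\mathrm{int}\,P$, closedness of $P$, and properness $P\neq Y$) addresses a point the paper leaves implicit by declaring $\psi^q_P:Y\to\mathbb{R}$ and deferring to \cite{GL}; it is not strictly required for the proposition as stated, but it is the right thing to check and your argument for it is sound.
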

\begin{proof}
\begin{itemize}
\item[(i)] Let $y\in Y$. For each $n\in\Bbb N $, there exists $t_n\in \Bbb R$, such that $y\in t_n q+P$ and $t_n\rightarrow \psi^q_P(y)$. Therefore, $t_n q\in y-P$, for each $n\in\Bbb N $. Since $P$ is closed, $\psi^q_P(y)q\in y-P$ and so $y\in \psi^q_P(y)q+P$.
\item[(ii)] See \cite[Proposition 2.1(d)]{GL}.
\item[(iii)] By the definition of $\psi^q_P$, it is clear that $\lambda q+P\subseteq [\psi^q_{P}\geq \lambda]$. Let $y\in [\psi^q_{P}\geq \lambda]$, thus for each $n\in\Bbb N $, $\psi^q_{P}(y) > \lambda- \frac{1}{n}$. Therefore by (ii), $y\in (\lambda- \frac{1}{n})q+ \mathrm{int} P\subseteq (\lambda- \frac{1}{n})q+P$. Hence by closedness of $P$, $y\in \lambda q+P$. Then $[\psi^q_{P}\geq \lambda]=\lambda q+P$.
\item[(iv)] This is the consequence of (ii) and (iii).
\end{itemize}
\end{proof}

Throughout this article, we assume that $(E,\|.\|)$ is a Banach space and $\sigma$ is a topology on $E$ coarser than the norm topology such that $(E,\sigma)$ is a Hausdorff topological vector space. Given a sequence $(x_k)\subseteq E$, we denote by $x_k\xrightarrow{\sigma}x$, its convergence in $\sigma$ topology.\\
We now recall some asymptotic notions. Suppose that $A\subseteq E$. The $\sigma$-asymptotic cone $A^\infty_{\sigma}$, see \cite{GLN,P}, is defined as follows:
$$A^\infty_{\sigma}:=\{d\in E: \exists t_k\rightarrow +\infty, \exists (x_k)\subseteq A : \frac{x_k}{t_k}\xrightarrow{\sigma} d\},$$
with the convention $\emptyset^\infty_{\sigma}=\emptyset$ (for this convention can see \cite{FHS,H1}).

For a sequence of nonempty sets $(A_n)\subseteq E$, the $\sigma$-horizon outer limit is defined as follows, \cite{HL}:
$${\limsup}_{n}^{\sigma,\infty} A_n:=\{u\in E: \exists x_{n_j}\in A_{n_j}, t_{n_j}\to +\infty : \frac{x_{n_j}}{t_{n_j}}\xrightarrow{\sigma} u\}.$$

In the following, we have developed the definition of $qx$-asymptotic function \cite{H1}, for an extended real-valued function that is defined on an  infinite-dimensional normed space.
\begin{definition}\cite{FHS}\label{new asymptotic}
Let $K$ be a nonempty subset of $E$ and let $f:K\rightarrow \Bbb R\cup \{+\infty\}$ be a proper function. The $\sigma$-generalized asymptotic ($\sigma g$-asymptotic) function $f^{\sigma g}: K_{\sigma}^{\infty}\rightarrow \Bbb R\cup \{\pm\infty\}$ is defined by
$$f^{\sigma g}(u) :=\inf\{\lambda: u\in ([f\leq \lambda])^\infty_{\sigma}\}.$$
\end{definition} 

\section{A New Scalarization Function Related to Set-Valued Maps}
By using $\psi^q_P$, we define a new scalarization function $\Psi_F:X\rightarrow \Bbb R\cup\{-\infty\}$ related to the set-valued map $F$, for each $x\in X$, as follows:
$$\Psi_F(x):=\inf_{z\in F(x)}\psi^q_P(z).$$

\begin{remark}\label{Rem.scal}
Note that, if $F$ is a $P$-proper-valued map, the scalarization function $\Psi_F$ can be written by using the extension of the Gerstewitz function introduced by Hern\'{a}ndez and Rodr\'{i}guez-Mar\'{i}n in \cite{HR07}, as follows:
$$\Psi_F(x)=-G_{-q}(\{0\},F(x)),~~\mbox{for~every}~~x\in X.$$ 
\end{remark}

By Proposition \ref{p3.1}, we have the following lemma.
\begin{lemma}\label{Lem 1 Psi}
Let $\lambda\in\Bbb R$, then we have
	\begin{itemize}
\item[(i)]$[\Psi_F\geq \lambda]=\{x\in X: F(x)\subseteq \lambda q+P\}$,
\item[(ii)] $Colev_{<^l}(F,\lambda q)\subseteq [\Psi_F\leq \lambda]$.
\end{itemize}
\end{lemma}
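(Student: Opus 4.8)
The plan is to unwind both claims directly from the definition $\Psi_F(x) = \inf_{z \in F(x)} \psi^q_P(z)$ together with parts (iii) and (ii) of Proposition~\ref{p3.1}.

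For part (i): By definition, $x \in [\Psi_F \geq \lambda]$ means $\inf_{z \in F(x)} \psi^q_P(z) \geq \lambda$, which is equivalent to $\psi^q_P(z) \geq \lambda$ for every $z \in F(x)$, i.e.\ $F(x) \subseteq [\psi^q_P \geq \lambda]$. By Proposition~\ref{p3.1}(iii), $[\psi^q_P \geq \lambda] = \lambda q + P$, so this says exactly $F(x) \subseteq \lambda q + P$. The only mild subtlety is that an infimum being $\geq \lambda$ is genuinely equivalent to every element of the set being $\geq \lambda$ (no issue with the infimum being attained), which is immediate.

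For part (ii): Let $x \in Colev_{<^l}(F, \lambda q)$, i.e.\ $\lambda q \nless^l F(x)$, which by the definition of $<^l$ means $F(x) \not\subseteq \lambda q + \mathrm{int} P$. I want to deduce $\Psi_F(x) \leq \lambda$. Since $F(x) \not\subseteq \lambda q + \mathrm{int} P = [\psi^q_P > \lambda]$ (using Proposition~\ref{p3.1}(ii)), there exists $z_0 \in F(x)$ with $\psi^q_P(z_0) \leq \lambda$; hence $\Psi_F(x) = \inf_{z \in F(x)} \psi^q_P(z) \leq \psi^q_P(z_0) \leq \lambda$, so $x \in [\Psi_F \leq \lambda]$. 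This establishes the inclusion.

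The argument is essentially a translation exercise, so there is no real obstacle; the one point to be careful about is the direction of the inclusion in (ii) — it is genuinely only an inclusion and not an equality, because $\Psi_F(x) \leq \lambda$ (an infimum condition) does not force any single element of $F(x)$ to lie outside $\lambda q + \mathrm{int} P$ when the infimum is not attained, so the reverse containment can fail. I would also note in passing that a symmetric computation gives $[\Psi_F > \lambda] = \{x : F(x) \subseteq \lambda q + \mathrm{int} P\}$ via Proposition~\ref{p3.1}(ii), which is the complement-side companion to part (i) and may be useful later, but it is not needed for the statement as given.
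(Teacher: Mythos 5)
Your proof is correct and follows essentially the same route as the paper: part (i) is the direct translation via Proposition~\ref{p3.1}(iii), and part (ii) extracts a point $z_0\in F(x)$ outside $\lambda q+\mathrm{int}\,P$ and bounds the infimum, exactly as in the paper. One caveat on your closing aside: the claimed equality $[\Psi_F>\lambda]=\{x: F(x)\subseteq \lambda q+\mathrm{int}\,P\}$ is only an inclusion ``$\subseteq$'' in general, since $F(x)\subseteq\lambda q+\mathrm{int}\,P$ forces $\psi^q_P(z)>\lambda$ for every $z\in F(x)$ but the infimum may still equal $\lambda$ when it is not attained (compare Example~\ref{scalar example}(i)); this does not affect the statement you were asked to prove.
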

\begin{proof}
\begin{itemize}
\item[(i)] This is a consequence of Proposition \ref{p3.1}(iii).
\item[(ii)] Let $x\in Colev_{<^l}(F,\lambda q)$. Thus $F(x)\nsubseteq \lambda q+\mathrm{int} P$. Therefore there exists $y\in F(x)$ such that $y\notin\lambda q+\mathrm{int} P$. Hence by Proposition \ref{p3.1}(ii), $\psi^q_{P}(y)\leq \lambda$. Then $\Psi_F(x)\leq \lambda$ and $x\in [\Psi_F\leq \lambda]$.
\end{itemize}
\qed
\end{proof}
Now, we introduce the following assumption on $F$.\\

{\bf Assumption $(A)$:}~~$\psi^q_P$ attains its infimum on $F(x)$ for all $x\in X$.\\

\begin{remark}\label{Rem. w-cpt-valued}
Let $F$ be a $P$-compact-valued map. Then by Remark \ref{Rem.scal}, \cite[Proposition 3.4]{HR07} and \cite[Theorem 3.6]{HR07}, $F$ satisfies assumption $(A)$.\\
Also, let $F$ be a compact-valued map. Therefore, by \cite[page 15]{L}, $F$ is $P$-compact-valued. Then $F$ satisfies assumption $(A)$.   

\end{remark}
The following proposition presents a characterization of set-valued maps that satisfy assumption $(A)$. 
\begin{proposition}\label{scalarization}
 If $F$ satisfies assumption $(A)$, then for each $\lambda\in\Bbb R$,
$$Colev_{<^l}(F,\lambda q)=[\Psi_F\leq \lambda].$$
The converse is true, if $F$ is a $P$-bounded-valued map.
\end{proposition}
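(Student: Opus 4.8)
The plan is to prove the two inclusions that together give the forward direction, and then handle the converse under the $P$-boundedness hypothesis. For the forward direction, Lemma \ref{Lem 1 Psi}(ii) already supplies $Colev_{<^l}(F,\lambda q)\subseteq [\Psi_F\leq \lambda]$ without any extra assumption, so the only thing to do is the reverse inclusion $[\Psi_F\leq \lambda]\subseteq Colev_{<^l}(F,\lambda q)$, and this is where assumption $(A)$ enters. Take $x$ with $\Psi_F(x)=\inf_{z\in F(x)}\psi^q_P(z)\leq \lambda$. By assumption $(A)$ there is some $\bar z\in F(x)$ with $\psi^q_P(\bar z)=\Psi_F(x)\leq \lambda$, so by Proposition \ref{p3.1}(ii) we get $\bar z\notin \lambda q+\mathrm{int} P$. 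Since $\bar z\in F(x)$, this shows $F(x)\nsubseteq \lambda q+\mathrm{int} P$, i.e. $\lambda q\nless^l F(x)$, which is exactly $x\in Colev_{<^l}(F,\lambda q)$. This completes the forward direction.

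For the converse, assume $F$ is $P$-bounded-valued and that $Colev_{<^l}(F,\lambda q)=[\Psi_F\leq \lambda]$ holds for every $\lambda\in\Bbb R$; I want to deduce assumption $(A)$, i.e. that $\psi^q_P$ attains its infimum on $F(x)$ for each fixed $x\in X$. Set $\mu:=\Psi_F(x)=\inf_{z\in F(x)}\psi^q_P(z)$. The first sub-step is to check that $\mu$ is finite: $\mu>-\infty$ because $P$-boundedness of $F(x)$ forces $F(x)$ to lie in $\mu_0 q + P$ for some real $\mu_0$ (equivalently $\psi^q_P$ is bounded below on $F(x)$), which one gets from $P$-bounded $\Rightarrow$ $P$-proper together with Proposition \ref{p3.1}(iii); and $\mu<+\infty$ since $F(x)\neq\emptyset$. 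Now apply the hypothesis with $\lambda=\mu$: since $\Psi_F(x)=\mu\leq\mu$, we have $x\in[\Psi_F\leq\mu]=Colev_{<^l}(F,\mu q)$, so $F(x)\nsubseteq \mu q+\mathrm{int} P$, meaning there is $\bar z\in F(x)$ with $\bar z\notin \mu q+\mathrm{int} P$. By Proposition \ref{p3.1}(ii) this says $\psi^q_P(\bar z)\leq\mu$, while $\psi^q_P(\bar z)\geq\inf_{z\in F(x)}\psi^q_P(z)=\mu$ by definition of the infimum. Hence $\psi^q_P(\bar z)=\mu=\Psi_F(x)$, so the infimum is attained at $\bar z$, which is precisely assumption $(A)$ for $x$. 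Since $x$ was arbitrary, $F$ satisfies assumption $(A)$.

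The step I expect to require the most care is establishing finiteness of $\mu$ in the converse; without it the argument "apply the hypothesis with $\lambda=\mu$" is vacuous, and this is exactly the point where the $P$-boundedness assumption is genuinely used (it is not needed anywhere in the forward direction). Concretely, I would spell out: $P$-bounded $\Rightarrow$ $P$-proper by \cite{GMMN}, but to bound $\psi^q_P$ below I actually need more than properness — I need that $F(x)\subseteq rq\cdot(\text{something})+P$, so I would run directly through the definition of $P$-bounded, applied to a neighborhood $U$ of zero small enough that $-q\notin \mathrm{cl}(U)$ (possible since $q\neq 0$ in a Hausdorff space), to get $F(x)\subseteq rU+P$ and then argue $\psi^q_P$ is bounded below on $rU+P$ using translation-invariance and $\leq$-increasingness of $\psi^q_P$. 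Everything else is a direct unwinding of definitions and Proposition \ref{p3.1}, with no convergence or compactness arguments needed.
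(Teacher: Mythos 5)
Your proof is correct and follows essentially the same route as the paper: the forward direction is identical (Lemma~\ref{Lem 1 Psi}(ii) for one inclusion, assumption $(A)$ plus Proposition~\ref{p3.1}(ii) for the other), and your converse is the paper's argument phrased directly rather than by contradiction (the paper assumes non-attainment, deduces $F(x)\subseteq \Psi_F(x)q+\mathrm{int} P$, and contradicts the level-set identity at $\lambda=\Psi_F(x)$). The only point of divergence is the finiteness of $\Psi_F(x)$, which the paper simply cites from Hern\'{a}ndez--Rodr\'{i}guez-Mar\'{i}n; your self-contained substitute is the right idea, but the condition $-q\notin \mathrm{cl}(U)$ does not by itself bound $\psi^q_P$ from below on $U$ (a neighborhood of zero in a general topological vector space need not be bounded). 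Instead choose $U$ with $q+U\subseteq P$, possible since $q\in\mathrm{int} P$; then $\psi^q_P\geq -1$ on $U$, hence $\psi^q_P\geq -r$ on $rU+P$ by positive homogeneity and $\leq$-increasingness, which gives $\Psi_F(x)\geq -r>-\infty$.
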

\begin{proof}
At first, suppose that $F$ satisfies assumption $(A)$ and $\lambda\in \Bbb R$. By Lemma \ref{Lem 1 Psi}(ii) we have, $Colev_{<^l}(F,\lambda q)\subseteq [\Psi_F\leq \lambda]$. Now, let $x\in [\Psi_F\leq \lambda]$. Thus $\Psi_F(x)\leq \lambda$. Since $F$ satisfies assumption $(A)$, there exists $y\in F(x)$ such that $\psi^q_P(y)\leq \lambda$. Therefore by Proposition \ref{p3.1}(ii), $y\notin \lambda q+ \mathrm{int} P$. Then $F(x)\nsubseteq \lambda q+\mathrm{int} P$, and $x\in Colev_{<^l}(F,\lambda q)$.\\
For the converse, by Remark \ref{Rem.scal} and \cite[Theorem 3.6]{HR07}, $\Psi_F(x)>-\infty$ for each $x\in X$.
Now on the contrary, suppose that there exists $x\in X$ such that $\psi^q_P$ doesn't attain its infimum on $F(x)$. Thus, $\psi^q_P(y)>\Psi_F(x)$ for each $y\in F(x)$. Hence, by Proposition \ref{p3.1}(ii), $y\in \Psi_F(x)q+ \mathrm{int} P$ for each $y\in F(x)$. Therefore $\Psi_F(x)q<^lF(x)$. Then, by assumptions we have, $\Psi_F(x)<\Psi_F(x)$, which is a contradiction.
\qed
\end{proof}
Now, we have the following corollary.
\begin{corollary}\label{cor. w-closed bd val.}
Suppose that $X=\Bbb R^n$ and $Y=\Bbb R^m$ are equipped with the usual norm topology, $F$ is $P$-closed-valued and  bounded-valued. Then $F$ satisfies assumption $(A)$.
\end{corollary}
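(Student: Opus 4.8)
The plan is to deduce Corollary \ref{cor. w-closed bd val.} from Proposition \ref{scalarization} by verifying that the finite-dimensional hypotheses put us in the "converse" branch. The key observation is that, in finite dimensions with the usual topology, a closed and bounded set is compact, so we should not even need the full converse of Proposition \ref{scalarization}; however, the cleanest route is still to invoke the converse statement. Thus, first I would note that since $F$ is bounded-valued and $Y=\mathbb{R}^m$, for each $x$ the set $F(x)$ is contained in a bounded set, and a bounded subset of $\mathbb{R}^m$ is $P$-bounded (for every neighborhood $U$ of $0$ one can scale $U$ to contain a given bounded set, so $F(x)\subseteq rU\subseteq rU+P$ for suitable $r>0$). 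Hence $F$ is $P$-bounded-valued, which is exactly the hypothesis required to apply the converse part of Proposition \ref{scalarization}.

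Next I would establish the equality $Colev_{<^l}(F,\lambda q)=[\Psi_F\leq\lambda]$ for every $\lambda\in\mathbb{R}$ directly, since Proposition \ref{scalarization} needs that identity as the input for its converse. Here I would use that $F$ is closed-valued and bounded-valued, hence compact-valued, in $\mathbb{R}^m$. By Remark \ref{Rem. w-cpt-valued}, a compact-valued map is $P$-compact-valued and therefore already satisfies assumption $(A)$; but to avoid circularity in presenting the corollary as an application of the \emph{converse}, I would instead argue the colevel/sublevel equality via the compactness of $F(x)$: for $x\in[\Psi_F\leq\lambda]$, by continuity of $\psi^q_P$ (it is finite-valued, subadditive and monotone, hence continuous on $\mathbb{R}^m$) the infimum $\Psi_F(x)=\inf_{z\in F(x)}\psi^q_P(z)$ is attained on the compact set $F(x)$, giving $y\in F(x)$ with $\psi^q_P(y)=\Psi_F(x)\leq\lambda$, so by Proposition \ref{p3.1}(ii) $y\notin\lambda q+\mathrm{int}P$, whence $F(x)\not\subseteq\lambda q+\mathrm{int}P$ and $x\in Colev_{<^l}(F,\lambda q)$. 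The reverse inclusion is Lemma \ref{Lem 1 Psi}(ii). This gives the required equality for all $\lambda$.

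Finally, having both the colevel/sublevel equality for all $\lambda\in\mathbb{R}$ and the $P$-bounded-valuedness of $F$, I would apply the converse direction of Proposition \ref{scalarization} to conclude that $F$ satisfies assumption $(A)$. Alternatively — and this is really the shortest argument — one simply observes that in $\mathbb{R}^m$ closed $+$ bounded $=$ compact, so $F$ is compact-valued, and then Remark \ref{Rem. w-cpt-valued} immediately yields assumption $(A)$ without any appeal to Proposition \ref{scalarization} at all. I expect the only mild subtlety to be bookkeeping about which implication of Proposition \ref{scalarization} is being used: if the corollary is meant to illustrate the converse, one must produce the colevel equality independently (as sketched above), whereas if one is content with the direct route it is essentially a one-line consequence of compactness together with Remark \ref{Rem. w-cpt-valued}. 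There is no real analytic obstacle here; the statement is a specialization, and the work is just confirming that "bounded in $\mathbb{R}^m$" upgrades to "$P$-bounded" and that "closed $+$ bounded" upgrades to "compact."
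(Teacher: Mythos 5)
There is a genuine gap, and it comes from a misreading of the hypothesis. The corollary assumes $F$ is \emph{$P$-closed-valued}, which by the paper's definition means that $F(x)+P$ is closed for each $x$ — it does \emph{not} mean that $F(x)$ itself is closed. Your argument (in both the main route and the ``shortest'' alternative) upgrades ``closed $+$ bounded'' to ``compact'' and then either attains the infimum of $\psi^q_P$ on the compact set $F(x)$ or invokes Remark \ref{Rem. w-cpt-valued} for compact-valued maps. Neither step is available: $F(x)$ may fail to be closed (e.g.\ $F(x)=\{0\}\cup(1,2)$ in $\mathbb{R}$ with $P=\mathbb{R}_+$ is bounded and $P$-closed but not compact), so the infimum cannot be attained by a compactness argument on $F(x)$, and the reduction to the compact-valued case collapses. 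If the hypothesis really were ``closed-valued'', the corollary would be an immediate special case of Remark \ref{Rem. w-cpt-valued} and would add nothing; its entire point is to cover maps whose values are only $P$-closed.

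Your overall architecture is otherwise the same as the paper's: show the identity $Colev_{<^l}(F,\lambda q)=[\Psi_F\leq\lambda]$ for all $\lambda$, observe that bounded subsets of $\mathbb{R}^m$ are $P$-bounded, and then apply the converse direction of Proposition \ref{scalarization}. The step you need to repair is the inclusion $[\Psi_F\leq\lambda]\subseteq Colev_{<^l}(F,\lambda q)$, and the correct argument (the paper's) avoids compactness of $F(x)$: pick $y_n\in F(x)$ with $\psi^q_P(y_n)\leq\lambda+\tfrac1n$; by boundedness of $F(x)$ extract a convergent subsequence $y_{k_n}\to y$, and note that $y\in\mathrm{cl}(F(x))\subseteq\mathrm{cl}(F(x)+P)=F(x)+P$ by $P$-closedness; lower semicontinuity of $\psi^q_P$ gives $\psi^q_P(y)\leq\lambda$, hence $y\notin\lambda q+\mathrm{int}P$ and $F(x)+P\nsubseteq\lambda q+\mathrm{int}P$, which forces $F(x)\nsubseteq\lambda q+\mathrm{int}P$, i.e.\ $x\in Colev_{<^l}(F,\lambda q)$. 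Note that the limit point $y$ lands only in $F(x)+P$, not in $F(x)$; that is exactly why the paper must route the conclusion through the converse of Proposition \ref{scalarization} rather than claiming attainment directly.
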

\begin{proof}
Let $\lambda\in \Bbb R$ and let $x\in [\Psi_F\leq \lambda]$. Thus for each $n\in\Bbb N$, there exists $y_n\in F(x)\subseteq F(x)+P$ such that $\psi^q_P(y_n)\leq \lambda+\frac{1}{n}$. Since $F(x)$ is bounded and $F(x)+P$ is closed, there exist $(y_{k_n})\subseteq (y_n)$ and $y\in F(x)+P$ such that $y_{k_n}\to y$. By Proposition \ref{p3.1}(ii), $\psi^q_P$ is a lsc function, therefore $\psi^q_P(y)\leq \lambda$. Hence by Proposition \ref{p3.1}(ii), $y\notin \lambda q+ \mathrm{int} P$. Thus $F(x)+P\nsubseteq \lambda q+ \mathrm{int} P$. Then $F(x)\nsubseteq \lambda q+ \mathrm{int} P$, because, by $F(x)\subseteq \lambda q+ \mathrm{int} P$ we have $F(x)+P\subseteq \lambda q+ \mathrm{int} P$, which is a contradiction. So $x\in Colev_{<^l}(F,\lambda q)$, and we conclude that $[\Psi_F\leq \lambda]\subseteq Colev_{<^l}(F,\lambda q)$. Therefore by Lemma \ref{Lem 1 Psi}(ii), $Colev_{<^l}(F,\lambda q)= [\Psi_F\leq \lambda]$. Since for each $x\in X$, $F(x)$ is bounded, by \cite[Remark 2.3]{GMMN}, $F$ is a $P$-bounded-valued map. Then by Proposition \ref{scalarization}, $F$ satisfies assumption $(A)$.  
\qed
\end{proof}
\begin{remark}\label{rem. osc and loc}
Let $X$ and $Y$ be as Corollary \ref{cor. w-closed bd val.}. By \cite{HL}, if $F$ be $l$-osc and locally bounded (for definitions can see, \cite{HL}), $F$ is $P$-closed-valued and  bounded-valued. Then by Corollary \ref{cor. w-closed bd val.}, $F$ satisfies assumption $(A)$. 
\end{remark}
The following result will be crucial in the proof of the main results of this paper.
\begin{proposition}\label{relation with scalarization}
Let $F$ satisfies assumption $(A)$. Then
$$\arg \min (\Psi_F)\subseteq l\mbox{-}SWEff(F).$$
\end{proposition}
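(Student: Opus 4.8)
The plan is to argue by contradiction and to exploit that, under Assumption $(A)$, the infimum defining $\Psi_F(x)$ is actually attained. Fix $\bar x\in\arg\min(\Psi_F)$; the goal is to show that no $x\in X$ with $x\neq\bar x$ can satisfy $F(x)<^l F(\bar x)$.

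First I would record two facts about $\bar x$. Since $\psi^q_P$ is $\mathbb R$-valued and, by Assumption $(A)$, it attains its infimum over $F(\bar x)$, there exists $\bar z\in F(\bar x)$ with $\psi^q_P(\bar z)=\Psi_F(\bar x)$; in particular $\Psi_F(\bar x)\in\mathbb R$ is finite. Moreover, since $\bar x$ minimizes $\Psi_F$, we have $\Psi_F(\bar x)\le\Psi_F(x)$ for every $x\in X$.

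Next, suppose for contradiction that $\bar x\notin l\mbox{-}SWEff(F)$; then there is $x\in X$, $x\neq\bar x$, with $F(x)<^l F(\bar x)$, i.e. $F(\bar x)\subseteq F(x)+\mathrm{int} P$. Applying this inclusion to the element $\bar z$ above yields $z\in F(x)$ and $p\in\mathrm{int} P$ with $\bar z=z+p$, so $z\prec\bar z$. Because $\psi^q_P$ is $<$-increasing, $\psi^q_P(z)<\psi^q_P(\bar z)$, whence
\[
\Psi_F(x)=\inf_{w\in F(x)}\psi^q_P(w)\le\psi^q_P(z)<\psi^q_P(\bar z)=\Psi_F(\bar x)\le\Psi_F(x),
\]
which is impossible. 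Therefore $\bar x\in l\mbox{-}SWEff(F)$.

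I expect the only point requiring care to be the role of Assumption $(A)$: it is exactly what guarantees that $\bar z$ exists and that $\Psi_F(\bar x)$ is a genuine real number, so that the strict inequality $\psi^q_P(\bar z)=\Psi_F(\bar x)<\Psi_F(x)$ is meaningful; without it $\Psi_F(\bar x)$ could a priori be $-\infty$. The rest is just the monotonicity and translation properties of $\psi^q_P$ together with Proposition \ref{p3.1}. (Equivalently, one could run the argument through colevel sets: with $m:=\inf\Psi_F$, Proposition \ref{scalarization} gives $\bar x\in[\Psi_F\le m]=Colev_{<^l}(F,mq)$, so $mq\nless^l F(\bar x)$; on the other hand $F(x)\subseteq\Psi_F(x)q+P\subseteq mq+P$ by Proposition \ref{p3.1}(iii), hence $F(\bar x)\subseteq F(x)+\mathrm{int} P\subseteq mq+\mathrm{int} P$, i.e. $mq<^l F(\bar x)$, a contradiction.)
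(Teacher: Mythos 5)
Your proof is correct and follows essentially the same route as the paper's: argue by contradiction, use Assumption $(A)$ to produce $\bar z\in F(\bar x)$ attaining $\Psi_F(\bar x)$, pull $\bar z$ back to some $z\in F(x)$ with $z\prec\bar z$, and conclude via the strict monotonicity of $\psi^q_P$ (which the paper re-derives from Proposition \ref{p3.1}(i)--(ii) rather than quoting directly). No gaps.
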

\begin{proof}
Since $F$ satisfies assumption $(A)$, for each $x\in X$, $\Psi_F(x)\in \Bbb R$.\\
Now on the contrary suppose that $\bar{x}\in\arg \min (\Psi_F)$, but $\bar{x}\notin l$-$SWEff(F)$. Thus there exists $x_0\in X$ such that $x_0\neq \bar{x}$ and $F(\bar{x})\subseteq F(x_0)+\mathrm{int} P$. Since $\psi^q_P$ attains its infimum on $F(\bar{x})$, so there exists $\bar{y}\in F(\bar{x})$ such that $\Psi_F(\bar{x})=\psi^q_P(\bar{y})$. On the other hand by $\bar{y}\in F(\bar{x})\subseteq F(x_0)+\mathrm{int} P$, there exists $y_0\in F(x_0)$ such that $\bar{y}\in y_0+\mathrm{int} P$. Also, by Proposition \ref{p3.1}(i), $y_0\in \psi^q_P(y_0)q+P$, hence $\bar{y}\in \psi^q_P(y_0)q+\mathrm{int} P$.
Therefore by Proposition \ref{p3.1}(ii),
$$\psi^q_P(\bar{y})>\psi^q_P(y_0).$$
Then we have
$$\Psi_F(\bar{x})=\psi^q_P(\bar{y})>\psi^q_P(y_0)\geq \Psi_F(x_0),$$
 which is a contradiction.
\qed
\end{proof}
In the following example, we show that in Proposition \ref{relation with scalarization} assumption $(A)$ is a necessary condition and also, in this proposition, the equality does not necessarily hold.
\begin{example}\label{scalar example}
Suppose that $X=\Bbb R$ and $Y=\Bbb R^2$ are equipped with the usual norm topology and $P=\Bbb R_{+}^2:=\{(x,y)\in \Bbb R^2: x,y\geq 0\}$.
\begin{itemize}
\item[(i)] Assume that $q=(1,1)$ and $A=\{(a,b)\in \Bbb R^2:\frac{1}{a}\leq b~,~ a>0\}$. Let $F:\Bbb R\rightrightarrows\Bbb R^2$ be defined by $F(x)=A$ for $x\neq 0$ and $F(0)=\{(0,0)\}$. Thus for each $x\in \Bbb R$, $\Psi_F(x)=0$ and $argmin(\Psi_F)=\Bbb R$. Also, we have $l$-$SWEff(F)=\{0\}$. Then $\arg \min (\Psi_F)\nsubseteq l\mbox{-}SWEff(F)$. Note that, $\psi^q_P$ doesn't attain its infimum on $F(x)$, for each $x\in \Bbb R\setminus\{0\}$.
\item[(ii)] Assume that $q=(\frac{1}{2},\frac{1}{2})$ and $A=\{(a, b)\in \Bbb R^2:~~a, b\in [3,4]\}$. Let $F:\Bbb R\rightarrow \Bbb R^2$ be defined by
	$$F(x)=\left\{
\begin{array}{ll}
	\{(x,1-x)\},& 0\leq x\leq 1,\\
	A,& o. w.\\
\end{array}\right.$$
Hence
$$\Psi_F(x)=\left\{
\begin{array}{ll}
	2x,&~ 0\leq x\leq \frac{1}{2},\\
2-2x,&~ \frac{1}{2}< x\leq 1,\\
	6,&~ o. w.\\
\end{array}\right.$$

Therefore $argmin (\Psi_F)=\{0,1\}$ and $l$-$SWEff(F)=[0,1]$. Then $l\mbox{-}SWEff(F)\neq\arg \min (\Psi_F)$. However, by Remark \ref{Rem. w-cpt-valued}, $F$ satisfies assumption $(A)$.
\end{itemize}
\end{example}

\section{A New Continuity Notion for Set-Valued Maps}
In this section, inspired by the notion of rgi functions \cite{A1}, we extend this notion to set-valued maps.\\
For the set-valued map $F$ we set
$$M_F^q:=\sup\{t\in\Bbb R: F(X)\subseteq tq+P\},$$
by the convention, $\sup\emptyset=-\infty$. We can see $M_F^q=\inf_{z\in F(X)}\psi^q_P(z)=\inf \Psi_F$. Notice that, for $\lambda>M_F^q$, $Colev_{<^l}(F,\lambda q)$ is a nonempty set.\\
Now we have  the definition of rgi maps in set-valued case.
\begin{definition}\label{rgi set valued}
We say that the set-valued map $F$ is $q$-regular-global-inf ($q$-srgi) if and only if $x_0\in (Colev_{<^l}(F,\lambda q))^c$ for some $\lambda>M_F^q$ implies that there exist $U\in \mathcal{N}(x_0)$ and $r>M_F^q$ such that $U\subseteq (Colev_{<^l}(F,rq))^c$.
\end{definition}
By the following proposition we have a characterization for $q$-srgi maps.
\begin{proposition}\label{P. tlc}
Let the set-valued map $G_F^q:(M_F^q,+\infty)\rightrightarrows X$ be defined by $G_F^q(\lambda)=Colev_{<^l}(F, {\lambda q})$ for each $\lambda>M_F^q$. Then $F$ is a $q$-srgi map if and only if $G_F^q$ is transfer closed on $(M_F^q,+\infty)$, that is,
	$$\bigcap_{\lambda>M_F^q}\mathrm{cl}(Colev_{<^l}(F,\lambda q))=\bigcap_{\lambda>M_F^q}Colev_{<^l}(F,\lambda q).$$
\end{proposition}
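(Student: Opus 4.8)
The plan is to unwind both conditions into elementary statements about neighbourhoods and closures, so that the equivalence reduces to a two-way contradiction argument; the only topological input is the standard fact that, for $S\subseteq X$, one has $x_0\in\mathrm{cl}(S)$ if and only if $U\cap S\neq\emptyset$ for every $U\in\mathcal{N}(x_0)$. Throughout I write $G(\lambda):=Colev_{<^l}(F,\lambda q)$ for $\lambda>M_F^q$. Observe first that $\bigcap_{\lambda>M_F^q}\mathrm{cl}(G(\lambda))\supseteq\bigcap_{\lambda>M_F^q}G(\lambda)$ always holds, so the transfer-closedness of $G_F^q$ on $(M_F^q,+\infty)$ is equivalent to the reverse inclusion; it is exactly this inclusion that the two implications will deliver.

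For the implication ``$F$ is $q$-srgi $\Rightarrow$ $G_F^q$ is transfer closed'', I would fix $x_0\in\bigcap_{\lambda>M_F^q}\mathrm{cl}(G(\lambda))$ and prove that $x_0\in G(\lambda)$ for every $\lambda>M_F^q$. Arguing by contradiction, suppose $x_0\notin G(\lambda_0)$ for some $\lambda_0>M_F^q$, i.e.\ $x_0\in(Colev_{<^l}(F,\lambda_0 q))^c$. Then the $q$-srgi property provides $U\in\mathcal{N}(x_0)$ and $r>M_F^q$ with $U\subseteq(Colev_{<^l}(F,rq))^c=G(r)^c$, that is, $U\cap G(r)=\emptyset$; this contradicts $x_0\in\mathrm{cl}(G(r))$. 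Hence $x_0\in\bigcap_{\lambda>M_F^q}G(\lambda)$, giving the missing inclusion.

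For the converse ``$G_F^q$ transfer closed $\Rightarrow$ $F$ is $q$-srgi'', I would assume $F$ is not $q$-srgi: there exist $\lambda>M_F^q$ and $x_0\in(Colev_{<^l}(F,\lambda q))^c$ such that for every $U\in\mathcal{N}(x_0)$ and every $r>M_F^q$ one has $U\not\subseteq(Colev_{<^l}(F,rq))^c$, equivalently $U\cap G(r)\neq\emptyset$. By the characterization of closure, this says precisely that $x_0\in\mathrm{cl}(G(r))$ for every $r>M_F^q$, so $x_0\in\bigcap_{r>M_F^q}\mathrm{cl}(G(r))$. Transfer closedness then forces $x_0\in\bigcap_{r>M_F^q}G(r)$, and since $\lambda$ is itself an admissible index, $\bigcap_{r>M_F^q}G(r)\subseteq G(\lambda)=Colev_{<^l}(F,\lambda q)$, contradicting the choice of $x_0$. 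Therefore $F$ is $q$-srgi.

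The step that needs the most care, and hence the main obstacle (a mild one), is the negation of the $q$-srgi condition in the converse direction: one must keep the single witnessing pair $(\lambda,x_0)$ fixed while letting $U$ and $r$ range freely, and then exploit the fact that $\lambda>M_F^q$ makes $G(\lambda)$ one of the terms in $\bigcap_{r>M_F^q}G(r)$, so that membership in that intersection immediately contradicts $x_0\in G(\lambda)^c$. Everything else is a routine translation between the complement-of-colevel-set notation and the definitions of $\mathrm{cl}$ and of $\mathcal{N}(x_0)$.
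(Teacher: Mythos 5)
Your proof is correct and follows essentially the same route as the paper's: both directions reduce to the standard characterization of closure via neighbourhoods, with only a cosmetic difference in that you phrase the two implications as arguments by contradiction where the paper argues via the contrapositive or directly. No gaps.
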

\begin{proof}
Suppose that $F$ is a $q$-srgi map and $x_0\notin \bigcap_{\lambda>M_F^q}Colev_{<^l}(F,\lambda q)$. Then, there exists $\lambda_0>M_F^q$ such that $x_0\notin Colev_{<^l}(F,\lambda_0 q)$. From the definition, there exist $U\in \mathcal{N}(x_0)$ and $r>M_F^q$ such that $U\subseteq (Colev_{<^l}(F,rq))^c$. Hence
$$U\bigcap Colev_{<^l}(F,rq)=\emptyset.$$
Thus, $x_0\notin \mathrm{cl}(Colev_{<^l}(F,rq))$ and so, $x_0\notin\bigcap_{\lambda>M_F^q}\mathrm{cl}(Colev_{<^l}(F,\lambda q))$.
	Therefore, we have
$$\bigcap_{\lambda>M_F^q}\mathrm{cl}(Colev_{<^l}(F,\lambda q))=\bigcap_{\lambda> M_F^q}Colev_{<^l}(F,\lambda q).$$
	Conversely, suppose that $G_F^q$ is transfer closed on $(M_F^q,+\infty)$, $x_0\in X$ and for some $\lambda> M_F^q$, $x_0\notin Colev_{<^l}(F,\lambda q)$. Since
$$\bigcap_{\lambda>M_F^q}\mathrm{cl}(Colev_{<^l}(F,\lambda q))=\bigcap_{\lambda> M_F^q}Colev_{<^l}(F,\lambda q),$$
 for some $r> M_F^q$ we have $x_0\notin \mathrm{cl}(Colev_{<^l}(F,rq))$. Thus, there exists $U\in \mathcal{N}(x_0)$ such that $U \cap Colev_{<^l}(F,rq)=\emptyset$. Therefore $U\subseteq (Colev_{<^l}(F,rq))^c$. Then $F$ is a $q$-srgi map.
\qed
\end{proof}
\begin{corollary}\label{Cr. rq closed}
Let for each $\lambda>M_F^q$, $Colev_{<^l}(F,\lambda q)$ be a closed set. Then $F$ is $q$-srgi.
\end{corollary}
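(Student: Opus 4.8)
The plan is to invoke the characterization of $q$-srgi maps provided by Proposition~\ref{P. tlc}, which reduces the claim to verifying that $G_F^q$ is transfer closed on $(M_F^q,+\infty)$; that is,
$$\bigcap_{\lambda>M_F^q}\mathrm{cl}(Colev_{<^l}(F,\lambda q))=\bigcap_{\lambda>M_F^q}Colev_{<^l}(F,\lambda q).$$
The inclusion $\supseteq$ is immediate since each set $Colev_{<^l}(F,\lambda q)$ is contained in its own closure. So the only thing to prove is the reverse inclusion $\subseteq$, and here the hypothesis does the work for free: by assumption, for every $\lambda>M_F^q$ the set $Colev_{<^l}(F,\lambda q)$ is closed, hence $\mathrm{cl}(Colev_{<^l}(F,\lambda q))=Colev_{<^l}(F,\lambda q)$, and the two intersections coincide term by term.

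Concretely, I would write: first note that $G_F^q$ being transfer closed on $(M_F^q,+\infty)$ is exactly the displayed equality of intersections in Proposition~\ref{P. tlc}. Since $Colev_{<^l}(F,\lambda q)$ is closed for every $\lambda>M_F^q$, we have $\mathrm{cl}(Colev_{<^l}(F,\lambda q))=Colev_{<^l}(F,\lambda q)$ for each such $\lambda$; taking intersections over all $\lambda>M_F^q$ yields the required equality. Therefore $G_F^q$ is transfer closed on $(M_F^q,+\infty)$, and by Proposition~\ref{P. tlc} the map $F$ is $q$-srgi.

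There is essentially no obstacle here: the corollary is a one-line consequence of Proposition~\ref{P. tlc} together with the elementary fact that a closed set equals its closure. The only point requiring any care is bookkeeping with the index set $(M_F^q,+\infty)$ — one should make sure the hypothesis is stated (as it is) for all $\lambda>M_F^q$ and not merely for some, so that the term-by-term replacement of $\mathrm{cl}(Colev_{<^l}(F,\lambda q))$ by $Colev_{<^l}(F,\lambda q)$ is legitimate inside the intersection. No coercivity, compactness, or asymptotic machinery is needed for this step.
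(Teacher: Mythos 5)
Your proof is correct and is essentially the argument the paper intends: the corollary is stated without proof precisely because, when every $Colev_{<^l}(F,\lambda q)$ with $\lambda>M_F^q$ is closed, the transfer-closedness equality in Proposition~\ref{P. tlc} holds term by term (equivalently, one can take $U=(Colev_{<^l}(F,\lambda q))^c$ and $r=\lambda$ directly in Definition~\ref{rgi set valued}). No issues.
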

Recall that, by \cite{HL}, $F$ is said to be $l$-colevel closed, if $Colev_{<^l}(F,B)$ be a closed set for each $B\subseteq Y$.
\begin{corollary}\label{cor2}
Let $F$ be $l$-colevel closed. Then $F$ is a $q$-srgi map.
\end{corollary}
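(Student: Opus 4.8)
The statement to prove is Corollary \ref{cor2}: if $F$ is $l$-colevel closed, then $F$ is a $q$-srgi map. This should be an almost immediate consequence of Corollary \ref{Cr. rq closed}, so the plan is essentially to verify that the hypothesis of Corollary \ref{Cr. rq closed} is a weakening of the hypothesis here.

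\medskip

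\emph{Plan.} Recall that, by the definition recalled just before the statement, $F$ being $l$-colevel closed means that $Colev_{<^l}(F,B)$ is a closed subset of $X$ for \emph{every} $B\subseteq Y$. In particular, for every real $\lambda>M_F^q$ one may take $B=\lambda q=\{\lambda q\}$ (a singleton, hence a legitimate choice of $B\subseteq Y$), and conclude that $Colev_{<^l}(F,\lambda q)$ is closed. Thus the hypothesis of Corollary \ref{Cr. rq closed} — namely that $Colev_{<^l}(F,\lambda q)$ is closed for each $\lambda>M_F^q$ — is satisfied. Applying Corollary \ref{Cr. rq closed} then yields immediately that $F$ is $q$-srgi, which is what we wanted. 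So the proof reduces to the single observation that "$l$-colevel closed" $\Rightarrow$ "$Colev_{<^l}(F,\lambda q)$ closed for all $\lambda>M_F^q$," which in turn is just specialization of the quantifier "for all $B\subseteq Y$" to the sets $B=\{\lambda q\}$.

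\medskip

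\emph{Order of steps.} First I would recall the definition of $l$-colevel closedness and note the notational convention $Colev_{<^l}(F,y):=Colev_{<^l}(F,\{y\})$ introduced after Definition of the colevel set, so that $Colev_{<^l}(F,\lambda q)$ makes sense as an instance of $Colev_{<^l}(F,B)$ with $B=\{\lambda q\}$. Second, I would deduce from $l$-colevel closedness that for each $\lambda>M_F^q$ the set $Colev_{<^l}(F,\lambda q)$ is closed. Third, I would invoke Corollary \ref{Cr. rq closed} to conclude that $F$ is $q$-srgi. That completes the argument.

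\medskip

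\emph{Main obstacle.} There is essentially no obstacle here: the result is a corollary in the strict sense, obtained by restricting the universally quantified parameter $B$ in the definition of $l$-colevel closedness to the one-parameter family $\{\lambda q:\lambda>M_F^q\}$ and then quoting Corollary \ref{Cr. rq closed}. The only point that needs a moment's care is purely notational — making sure the reader sees that "$Colev_{<^l}(F,\lambda q)$" is shorthand for "$Colev_{<^l}(F,\{\lambda q\})$", which is covered by the convention stated immediately after the definition of the colevel set — so that the chain "$l$-colevel closed $\Rightarrow$ each $Colev_{<^l}(F,\lambda q)$ closed $\Rightarrow$ $q$-srgi (by Corollary \ref{Cr. rq closed})" is valid.
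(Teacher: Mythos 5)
Your proof is correct and follows exactly the route the paper intends: specialize the quantifier in the definition of $l$-colevel closedness to the singletons $B=\{\lambda q\}$ with $\lambda>M_F^q$ and invoke Corollary \ref{Cr. rq closed}. The paper leaves this corollary unproved precisely because it is this immediate specialization, so nothing is missing.
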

In the following examples we show that the converse statement of Corollary \ref{Cr. rq closed} does not necessarily holds.
\begin{example}
Suppose that $X=Y=\ell^\infty$ are equipped with the usual norm topology, $P=\ell_{+}^{\infty}=\{(x_i)\in \ell^\infty:~x_i\geq 0,~\forall i\in \Bbb N\}$, $q=(1,1,...)$ and $D=\{rq: r\in\Bbb R\}$. Let $F: \ell^{\infty}\rightrightarrows \ell^{\infty}$ be defined as follows:
$$F(x)=\left\{
\begin{array}{ll}
	\{|r|q\}, \quad\quad\quad\quad & x=rq,\\
	\{tq:t\geq 1\}, & x\in D^c.
\end{array}\right.$$
Therefore, $M_q^F=0$ and
$$Colev_{<^l}(F,\lambda q)=\left\{
\begin{array}{ll}
	D^c\cup\{rq:|r|\leq \lambda\}, & \lambda\geq 1,\\
	\{rq:|r|\leq \lambda\}, &  0<\lambda<1.\\
\end{array}\right.$$
Hence, for $\lambda\geq 1$, $Colev_{<^l}(F,\lambda q)$ is not closed, but
$$\bigcap_{\lambda>0}Colev_{<^l}(F,\lambda q)=\bigcap_{\lambda>0} \mathrm{cl}(Colev_{<^l}(F,\lambda q))=\{0\}.$$
\end{example}
\begin{example} Suppose that $X=Y=\Bbb R$ are equipped with the usual norm topology, $P=\Bbb R_+$ and $q=1$. Let $F: \Bbb R\rightrightarrows \Bbb R$ be an interval function; i.e., $F(x) = [F^L(x), F^U(x)]$, where $F^L, F^U:\Bbb R  \to \Bbb R$ are two proper functions with $F^L(x)\leq F^U(x)$ for all $x\in \Bbb R$.  It's easy to see that $M_F^q=\inf {F^L}$ and for each $r\in \Bbb R$,
\begin{equation}\label{EQ1}
Colev_{<^l}(F,rq)=\{x\in \Bbb R: F(x)\leq^l \{rq\}\}=[F^L\leq r].
\end{equation}
Therefore $F$ is $q$-srgi if and only if $F^L$ is rgi.
Suppose that $F^L, F^U: \Bbb R \to \Bbb R$ are defined as
 $$F^L(x)=\left\{
		\begin{array}{ll}
			|x|,&~~ x < 1,\\
			2x,& ~~x\geq 1,
		\end{array}\right.$$
and $F^U(x)=|2x|$, for each $x\in\Bbb R$. Thus $F^L$ is rgi but it is not lsc and $Colev_{\leq^l}(F,\frac{3}{2}q)$ is not closed. Indeed, by (\ref{EQ1}), $Colev_{<^l}(F,\frac{3}{2}q)=[F^L\leq \frac{3}{2}]=(-\infty, 1)$. Then $F$ is $q$-srgi, but by (\ref{EQ1}), it is not $P$-lsc (for definition can see, \cite[Definition 3.1.22(i)]{K0}) and for some $\lambda>M_F^q$, $Colev_{<^l}(F,\lambda q)$ is not a closed set.
\end{example}
In the following proposition we have another characterization for $q$-srgi maps, which will be used in the next section.
\begin{proposition}\label{Prop.rgi}
The set-valued map $F$ is $q$-srgi if and only if $\Psi_F$ is rgi.
\end{proposition}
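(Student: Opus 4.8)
The plan is to prove both implications directly from the definitions, using the identity $\inf \Psi_F = M_F^q$ (established just before Definition~\ref{rgi set valued}) to translate ``$\Psi_F(x_0)>\inf\Psi_F$'' into ``$\Psi_F(x_0)>M_F^q$'', and then using Lemma~\ref{Lem 1 Psi} together with Proposition~\ref{scalarization}-type reasoning to pass between colevel sets $Colev_{<^l}(F,\lambda q)$ and sublevel sets $[\Psi_F\le \lambda]$. The key elementary fact I would record first is that, for any $x\in X$, the condition $x\notin Colev_{<^l}(F,\lambda q)$ is equivalent to $F(x)<^l F(\lambda q)$, i.e. $F(x)\subseteq \lambda q+\mathrm{int} P$, and by Proposition~\ref{p3.1}(ii) this holds if and only if $\psi^q_P(z)>\lambda$ for every $z\in F(x)$, hence if and only if $\Psi_F(x)=\inf_{z\in F(x)}\psi^q_P(z)\ge \lambda$. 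Note the subtlety: $\Psi_F(x)\ge\lambda$, not necessarily $>\lambda$; this is exactly the point that needs care and is handled by the $\varepsilon$-room available in choosing the threshold.

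For the forward implication, suppose $F$ is $q$-srgi and let $x_0$ satisfy $\Psi_F(x_0)>\inf\Psi_F = M_F^q$. Pick $\lambda$ with $M_F^q<\lambda<\Psi_F(x_0)$. Then by the fact above, $\psi^q_P(z)\ge \Psi_F(x_0)>\lambda$ for all $z\in F(x_0)$, so $F(x_0)\subseteq \lambda q+\mathrm{int} P$ and hence $x_0\in (Colev_{<^l}(F,\lambda q))^c$. Since $F$ is $q$-srgi there are $U\in\mathcal{N}(x_0)$ and $r>M_F^q$ with $U\subseteq (Colev_{<^l}(F,rq))^c$; by the equivalence this says $\Psi_F(x)\ge r$ for every $x\in U$, so $\inf\Psi_F(U)\ge r$. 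Choosing any $c$ with $M_F^q<c<r$ gives $c>\inf\Psi_F$ and $\inf\Psi_F(U)\ge r>c$, which is precisely the rgi condition for $\Psi_F$ at $x_0$.

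For the converse, suppose $\Psi_F$ is rgi and let $x_0\in (Colev_{<^l}(F,\lambda q))^c$ for some $\lambda>M_F^q$. By the equivalence, $\Psi_F(x_0)\ge \lambda>M_F^q=\inf\Psi_F$, so $\Psi_F(x_0)>\inf\Psi_F$ and the rgi property yields $U\in\mathcal{N}(x_0)$ and $c>\inf\Psi_F=M_F^q$ with $\inf\Psi_F(U)>c$. Fix $r$ with $M_F^q<r<\inf\Psi_F(U)$; note $r>M_F^q$ as required. Then for every $x\in U$ we have $\Psi_F(x)>r$, hence $\psi^q_P(z)\ge\Psi_F(x)>r$ for all $z\in F(x)$, i.e. $F(x)\subseteq rq+\mathrm{int} P$, so $x\notin Colev_{<^l}(F,rq)$. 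Thus $U\subseteq (Colev_{<^l}(F,rq))^c$, which is the definition of $F$ being $q$-srgi.

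The main obstacle — really the only one — is bookkeeping the strict versus non-strict inequalities when moving between the three descriptions ($\psi^q_P$ on all of $F(x)$, the colevel set, the value $\Psi_F(x)$), since $\Psi_F(x)$ is an infimum and need not be attained (assumption~$(A)$ is deliberately not invoked here). The device that dissolves the difficulty in both directions is to never work at the borderline threshold: in the forward direction pick $\lambda$ strictly below $\Psi_F(x_0)$, and in the converse direction pick $r$ strictly below $\inf\Psi_F(U)$; the open cone $\mathrm{int} P$ in the definition of $<^l$ leaves exactly enough slack for this. No topological hypotheses on $F$ are needed.
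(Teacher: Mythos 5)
Your proof is correct and follows essentially the same route as the paper's: both directions translate between the colevel sets $Colev_{<^l}(F,\lambda q)$ and the sublevel/superlevel sets of $\Psi_F$ via Proposition~\ref{p3.1}(ii) (which the paper packages as Lemma~\ref{Lem 1 Psi}), use $\inf\Psi_F=M_F^q$, and insert a strictly intermediate threshold to absorb the $\geq$-versus-$>$ discrepancy. The only blemish is that your preliminary ``key fact'' asserts $x\notin Colev_{<^l}(F,\lambda q)\Leftrightarrow \Psi_F(x)\geq\lambda$ as a biconditional, which is false in general (if the infimum defining $\Psi_F(x)$ is attained at a point of value exactly $\lambda$, then $x$ lies in the colevel set); fortunately your argument only ever invokes the two true one-sided implications ($x\notin Colev_{<^l}(F,\lambda q)\Rightarrow\Psi_F(x)\geq\lambda$ and $\Psi_F(x)>\lambda\Rightarrow x\notin Colev_{<^l}(F,\lambda q)$), so nothing breaks.
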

\begin{proof}
Let $F$ be a $q$-srgi map and let $\Psi_F(x_0)>\lambda$ for some $\lambda>\inf\Psi_F=M_F^q$. By Lemma \ref{Lem 1 Psi}(ii), we have $x_0\notin Colev_{<^l}(F,\lambda q)$. Hence there exist $U\in \mathcal{N}(x_0)$ and $r>M_F^q$ such that $U\subseteq (Colev_{<^l}(F,rq))^c$. Therefore for each $x\in U$, $x\notin Colev_{<^l}(F,rq)$. Then we conclude that $F(x)\subseteq rq+ \mathrm{int}P$, for each $x\in U$. Thus by Lemma \ref{Lem 1 Psi}(i) for each $x\in U$, $\Psi_F(x)\geq r$. Hence
$$\inf\Psi_F(U)=\inf_{x\in U} \Psi_F(x)\geq r>M_F^q=\inf\Psi_F,$$
and so $\Psi_F$ is rgi.\\
Conversely, let for some $\lambda>M_F^q$ and $x_0\in X$, $x_0\notin Colev_{<^l}(F,\lambda q)$, that is, $F(x_0)\subseteq \lambda q+\mathrm{int}P$. By Lemma \ref{Lem 1 Psi}(i), $\Psi_F(x_0)\geq \lambda$. Hence, there exists $M_F^q<\lambda'<\lambda$ such that $\Psi_F(x_0)>\lambda'$. Since $\Psi_F$ is rgi, there exists $r\in \Bbb R$ and $U\in \mathcal{N}(x_0)$ such that
$$\inf\Psi_F(U)=\inf_{x\in U} \Psi_F(x)>r>\inf\Psi_F=M_F^q.$$
Therefore for each $x\in U$, $\Psi_F(x)>r$. Thus by Lemma \ref{Lem 1 Psi}(ii), $U\subseteq (Colev_{<^l}(F,rq))^c$. Then $F$ is a $q$-srgi map.
\qed
\end{proof}

\section{Some New Existence Results of Strict Weakly $l$-Efficient Solution of (SOP)}
In this section, we obtain some existence results of strict weakly $l$-efficient solution of (SOP).\\ 
At first, we define a coercivity condition for set-valued maps as the following.
\begin{definition}
 We say that $F$ satisfies $q$-global-inf-coercive-condition ($q$-sgicc), if there exists $\lambda>M_F^q$ such that $Colev_{<^l}(F,\lambda q)$ is a relatively compact set.
\end{definition}
Now, we have the following proposition.
\begin{proposition}\label{Pro.wcpt to gicc}
Let $F$ satisfies assumption $(A)$ and let there exists $x_0\in X$ such that $Colev_{<^l}(F,F(x_0))$ is relatively compact. Then $x_0\in l$-$SWEff(F)$ or $F$ satisfies $q$-sgicc.
\end{proposition}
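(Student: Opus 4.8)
The plan is to pass from the colevel set of $F$ at the vector height $F(x_0)$ to the colevel set at the scalar height $\lambda_0 q$ with $\lambda_0:=\Psi_F(x_0)$, and then to close the argument by a dichotomy on whether $\lambda_0>M_F^q$ or $\lambda_0=M_F^q$. By assumption $(A)$, $\lambda_0=\Psi_F(x_0)\in\Bbb R$, and $\lambda_0\geq M_F^q$ since $M_F^q=\inf\Psi_F$. Applying Lemma \ref{Lem 1 Psi}(i) at $\lambda=\lambda_0$ gives $F(x_0)\subseteq\lambda_0 q+P$, hence $F(x_0)+\mathrm{int} P\subseteq\lambda_0 q+P+\mathrm{int} P\subseteq\lambda_0 q+\mathrm{int} P$. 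Taking complements in the definition of the colevel sets, this inclusion yields $Colev_{<^l}(F,\lambda_0 q)\subseteq Colev_{<^l}(F,F(x_0))$; as the latter set is relatively compact and any subset of a relatively compact set is relatively compact, so is $Colev_{<^l}(F,\lambda_0 q)$.

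Now I distinguish two cases. If $\lambda_0>M_F^q$, then the relatively compact set $Colev_{<^l}(F,\lambda_0 q)$ witnesses, straight from the definition, that $F$ satisfies $q$-sgicc. If $\lambda_0=M_F^q$, then $\Psi_F(x_0)=\inf\Psi_F$, that is $x_0\in\arg\min(\Psi_F)$, and Proposition \ref{relation with scalarization} gives $x_0\in l\mbox{-}SWEff(F)$. Since $\lambda_0\geq M_F^q$, one of the two cases always applies, which is the claimed alternative.

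I do not expect a serious obstacle here: the heart of the matter is the inclusion $Colev_{<^l}(F,\lambda_0 q)\subseteq Colev_{<^l}(F,F(x_0))$, which rests on $F(x_0)\subseteq\lambda_0 q+P$, and this is precisely the place where assumption $(A)$ is needed (it guarantees $\Psi_F(x_0)$ is finite, so that Lemma \ref{Lem 1 Psi}(i) is applicable at $\lambda_0$). The remaining work is the routine case split, the only care being to keep the direction of the set inclusions straight when passing to complements and to recall that $P+\mathrm{int} P\subseteq\mathrm{int} P$.
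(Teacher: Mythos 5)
Your proof is correct and follows essentially the same route as the paper: both reduce $Colev_{<^l}(F,F(x_0))$ to a scalar colevel set $Colev_{<^l}(F,\lambda q)$ via the scalarization $\Psi_F$ and invoke Proposition \ref{relation with scalarization} to handle the efficiency branch of the alternative. The only cosmetic difference is that the paper argues by contraposition, choosing $\lambda$ strictly below $\Psi_F(x_0)$ so that $F(x_0)\subseteq\lambda q+\mathrm{int}\,P$ directly, whereas you take $\lambda_0=\Psi_F(x_0)$ itself and pass through $P+\mathrm{int}\,P\subseteq\mathrm{int}\,P$ before splitting cases on $\lambda_0>M_F^q$ versus $\lambda_0=M_F^q$.
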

\begin{proof}
Let $x_0\notin l$-$SWEff(F)$, thus by Proposition \ref{relation with scalarization}, $x_0\notin \arg \min (\Psi_F)$. Hence $\Psi_F(x_0)>M_F^q$ and there exists $\lambda>M_F^q$ such that $\Psi_F(x_0)>\lambda$. Therefore by Lemma \ref{Lem 1 Psi}(ii), $F(x_0)\subseteq \lambda q+\mathrm{int}P$.\\
Now assume that $x\notin Colev_{<^l}(F, F(x_0))$, thus $F(x_0)<^l F(x)$. Therefore, since $F(x_0)\subseteq \lambda q+\mathrm{int}P$ we have $x\notin Colev_{<^l}(F,\lambda q)$. Hence $Colev_{<^l}(F,\lambda q)\subseteq Colev_{<^l}(F, F(x_0))$. Then $Colev_{<^l}(F,\lambda q)$ is relatively compact and $F$ satisfies $q$-sgicc.
\qed
\end{proof}
In the following example we show that there exists a set-valued map that satisfies $q$-sgicc but for each $x\in X$, $Colev_{<^l}(F, F(x))$ is not a relatively compact set.
\begin{example}\label{e14}
Suppose that $X=Y=\Bbb R^2$ are equipped with the usual norm topology, $P=\Bbb R^2_+$ and $q=(1,1)$. Let $F:\Bbb R^2\rightrightarrows \Bbb R^2$ be defined by
	$$F(x,y)=\left\{
	\begin{array}{ll}
		(-3,2)+B_{\Bbb R^2},& (x,y)=(1,0),\\
		(|x|,|y|)+B_{\Bbb R^2},& (x,y)\neq (1,0),\\
	\end{array}\right.$$
where $B_{\Bbb R^2}$ is the unit closed ball in $\Bbb R^2$. Then $M_F^q=-4$ and $Colev_{<^l}(F,-3q)=\{(1,0)\}$. Thus $F$ is $q$-sgicc. But for each $(x,y)\in \Bbb R^2$, $Colev_{<^l}(F, F((x,y)))$ is not relatively compact.
\end{example}
In the following, as the first main result of this paper, we have a new Weierstrass-type theorem for a set-valued map that satisfies the coercivity condition $q$-sgicc.
\begin{proposition}\label{first theorem}
Let $F$ be a $q$-srgi map that satisfies assumption $(A)$ and $q$-sgicc.  If $F(X)$ is $P$-bounded, then $l$-$SWEff(F)$ is nonempty.
\end{proposition}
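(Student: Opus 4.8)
The plan is to reduce the statement to a scalar Weierstrass‑type result for the function $\Psi_F$ and then transfer the conclusion back to $l$-$SWEff(F)$ via Proposition \ref{relation with scalarization}. First I would record the elementary preliminaries: since $F$ satisfies assumption $(A)$ and $\psi^q_P$ is real‑valued, $\Psi_F$ is a real‑valued function on $X$, so $\arg \min (\Psi_F)$ is well defined and $\inf\Psi_F=M_F^q<+\infty$; moreover $M_F^q$ is finite, which is where $P$‑boundedness of $F(X)$ enters: as $q\in\mathrm{int}\,P$ there is a balanced neighborhood $V$ of $0$ with $q+V\subseteq P$, and $P$‑boundedness provides $r>0$ with $F(X)\subseteq rV+P$, whence $z+rq\in P$, i.e. $\psi^q_P(z)\geq -r$, for every $z\in F(X)$, so $M_F^q\geq -r>-\infty$. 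By Proposition \ref{Prop.rgi}, the hypothesis that $F$ is $q$-srgi is equivalent to $\Psi_F$ being rgi. Hence it suffices to show $\arg \min (\Psi_F)\neq\emptyset$: once this is known, Proposition \ref{relation with scalarization} gives $\emptyset\neq\arg \min (\Psi_F)\subseteq l$-$SWEff(F)$.

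The core step would be to establish the identity
$$\arg \min (\Psi_F)=\bigcap_{\lambda>M_F^q}\mathrm{cl}\big([\Psi_F\leq\lambda]\big).$$
The inclusion ``$\subseteq$'' is immediate, since $\arg \min (\Psi_F)=[\Psi_F\leq M_F^q]\subseteq[\Psi_F\leq\lambda]\subseteq\mathrm{cl}([\Psi_F\leq\lambda])$ for every $\lambda\geq M_F^q$. For ``$\supseteq$'', take $x_0$ in the right‑hand side and suppose $\Psi_F(x_0)>M_F^q=\inf\Psi_F$. Since $\Psi_F$ is rgi, there are $U\in\mathcal{N}(x_0)$ and $c>\inf\Psi_F$ with $\inf\Psi_F(U)>c$; choosing $\lambda$ with $M_F^q<\lambda<c$ forces $\Psi_F(x)>\lambda$ for all $x\in U$, so $U\cap[\Psi_F\leq\lambda]=\emptyset$ and hence $x_0\notin\mathrm{cl}([\Psi_F\leq\lambda])$, contradicting the choice of $x_0$. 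Thus $\Psi_F(x_0)=M_F^q$, i.e. $x_0\in\arg \min (\Psi_F)$.

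It then remains to feed in the coercivity. Since $F$ satisfies $q$-sgicc there is $\lambda_0>M_F^q$ with $Colev_{<^l}(F,\lambda_0 q)$ relatively compact; by Proposition \ref{scalarization} (applicable because $F$ satisfies $(A)$) this set equals $[\Psi_F\leq\lambda_0]$, so $K:=\mathrm{cl}([\Psi_F\leq\lambda_0])$ is compact, and it is nonempty because $\lambda_0>M_F^q=\inf\Psi_F$. For $M_F^q<\lambda\leq\lambda_0$ the sets $\mathrm{cl}([\Psi_F\leq\lambda])$ are nonempty, closed, and contained in $K$, and they are nested in $\lambda$, hence enjoy the finite intersection property; compactness of $K$ then yields that their intersection is nonempty. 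As $\mathrm{cl}([\Psi_F\leq\lambda])\supseteq K$ for every $\lambda>\lambda_0$, this intersection coincides with $\bigcap_{\lambda>M_F^q}\mathrm{cl}([\Psi_F\leq\lambda])=\arg \min (\Psi_F)$, which is therefore nonempty, and Proposition \ref{relation with scalarization} finishes the proof. The single delicate point is the displayed identity: it is precisely there that ``rgi'' is used as a substitute for lower semicontinuity, letting one replace each sublevel set by its closure without enlarging the argmin, so that the otherwise routine finite‑intersection‑property compactness argument survives the lack of continuity of $F$.
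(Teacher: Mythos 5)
Your proof is correct and follows essentially the same route as the paper: reduce to the scalar function $\Psi_F$ (real-valued by assumption $(A)$, bounded below by $P$-boundedness of $F(X)$, rgi by Proposition \ref{Prop.rgi}, with sublevel sets equal to colevel sets by Proposition \ref{scalarization}), obtain $\arg\min(\Psi_F)\neq\emptyset$ from the relatively compact sublevel set, and transfer back via Proposition \ref{relation with scalarization}. The only difference is that where the paper invokes the scalar Weierstrass-type theorem for rgi functions as a black box (Theorem 16 of the cited work of Amini-Harandi et al.), you supply a self-contained and correct proof of that step via the identity $\arg\min(\Psi_F)=\bigcap_{\lambda>M_F^q}\mathrm{cl}([\Psi_F\leq\lambda])$ and the finite intersection property.
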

\begin{proof}
By Remark \ref{Rem.scal} and \cite[Theorem 3.6]{HR07}, $\Psi_F$ is bounded from below. Also, by Proposition \ref{Prop.rgi}, $\Psi_F$ is a rgi function. On the other hand by Proposition \ref{scalarization}, for each $\lambda\in\Bbb R$,
$$Colev_{<^l}(F,\lambda q)=[\Psi_F\leq \lambda].$$
Therefore by assumption, there exists $\lambda\in\Bbb R$ such that $[\Psi_F\leq \lambda]$ is a relatively compact set. Thus by \cite[Theorem 16]{A1}, $\arg \min (\Psi_F)$ is a nonempty set. Then by Proposition \ref{relation with scalarization}, $l$-$SWEff(F)$ is nonempty.
\qed
\end{proof}
By Proposition \ref{Pro.wcpt to gicc} and Proposition \ref{first theorem} we have the following corollary.
\begin{corollary}\label{cor4}
Let $F$ be $q$-srgi that satisfies assumption $(A)$ and let there exists $x_0\in X$ such that $Colev_{<^l}(F,F(x_0))$ is relatively compact.  If $F(X)$ is $P$-bounded, then $l$-$SWEff(F)$ is a nonempty set.
\end{corollary}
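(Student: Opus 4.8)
The plan is to combine the coercivity dichotomy of Proposition~\ref{Pro.wcpt to gicc} with the Weierstrass-type existence result of Proposition~\ref{first theorem} via a short case distinction on the point $x_0$.

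First I would invoke Proposition~\ref{Pro.wcpt to gicc}: since $F$ satisfies assumption $(A)$ and $Colev_{<^l}(F,F(x_0))$ is relatively compact by hypothesis, we obtain that either $x_0\in l\mbox{-}SWEff(F)$ or $F$ satisfies $q$-sgicc. In the first case there is nothing left to prove, because then $l\mbox{-}SWEff(F)\neq\emptyset$ immediately. So the only real work is in the second case.

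In the second case, I would check that all hypotheses of Proposition~\ref{first theorem} are met: $F$ is $q$-srgi and satisfies assumption $(A)$ by the standing assumptions of the corollary, $F$ satisfies $q$-sgicc by the conclusion of the previous step, and $F(X)$ is $P$-bounded by hypothesis. Applying Proposition~\ref{first theorem} then yields that $l\mbox{-}SWEff(F)$ is nonempty, which closes this case. Since both cases produce $l\mbox{-}SWEff(F)\neq\emptyset$, the corollary follows.

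There is essentially no obstacle here beyond bookkeeping: the content has been entirely front-loaded into Propositions~\ref{Pro.wcpt to gicc} and~\ref{first theorem}. The only point deserving a moment's care is verifying that the hypothesis "$Colev_{<^l}(F,F(x_0))$ relatively compact" of the corollary is exactly the hypothesis feeding Proposition~\ref{Pro.wcpt to gicc}, and that "$F(X)$ is $P$-bounded" is precisely what Proposition~\ref{first theorem} additionally requires; both match verbatim, so the argument is a clean two-line dichotomy.
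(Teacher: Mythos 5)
Your proposal is correct and is exactly the argument the paper intends: the corollary is stated immediately after the sentence ``By Proposition~\ref{Pro.wcpt to gicc} and Proposition~\ref{first theorem} we have the following corollary,'' so the dichotomy from Proposition~\ref{Pro.wcpt to gicc} followed by an application of Proposition~\ref{first theorem} in the $q$-sgicc case is precisely the paper's (unwritten) proof. Nothing further is needed.
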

By Remark \ref{rem. osc and loc}, the following corollary is an extension of \cite[Theorem 3]{HL}.
\begin{corollary}
Suppose that $X=\Bbb R^n$ and $Y=\Bbb R^m$ are equipped with the usual norm topology and $F:\Bbb R^n\rightrightarrows \Bbb R^m$ is $l$-colevel closed, $P$-closed-valued  and bounded-valued. Assume that for some $x_0 \in X$, $Colev_{<^l}(F, F(x_0))$ is bounded and $F(Colev_{<^l}(F, F(x_0)))$ is  a $P$-bounded set. Then $l$-$SWEff(F)$ is nonempty and compact.
\end{corollary}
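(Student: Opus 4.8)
The plan is to reduce the problem to a set optimization problem over a \emph{compact} domain, apply the coercive Weierstrass-type result Proposition \ref{first theorem} there, and then transfer the conclusion back to $X$. Put $C:=Colev_{<^l}(F,F(x_0))$. Since $F$ is $l$-colevel closed, $C$ is closed, and by hypothesis $C$ is bounded, so $C$ is compact. By Corollary \ref{cor. w-closed bd val.}, $F$ satisfies assumption $(A)$; since $F$ is bounded-valued it is $P$-bounded-valued, hence $P$-proper-valued, so $l$-$WEff(F)=l$-$SWEff(F)$ by \cite[Lemma 2.2]{CFFH2023}; and since $F$ is $l$-colevel closed, $F$ is $q$-srgi by Corollary \ref{cor2}. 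I would first record that $x_0\in C$: otherwise $F(x_0)\subseteq F(x_0)+\mathrm{int}P$, and choosing $\bar{y}\in F(x_0)$ with $\psi^q_P(\bar{y})=\Psi_F(x_0)$ (possible by $(A)$, with $\Psi_F(x_0)\in\Bbb R$ since $F(x_0)$ is nonempty and $P$-proper) and writing $\bar{y}=y'+p$, $y'\in F(x_0)$, $p\in\mathrm{int}P$, we get $\psi^q_P(\bar{y})>\psi^q_P(y')\geq\Psi_F(x_0)$, a contradiction; hence $F(x_0)\nless^l F(x_0)$, that is, $x_0\in C$.

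Next I would pass to the restriction $\widetilde{F}:=F|_C:C\rightrightarrows Y$, regarding $C$ as a (compact, hence Hausdorff) topological space. Clearly $\widetilde{F}$ is $P$-closed-valued and bounded-valued, and since assumption $(A)$ is a pointwise property of the values, $\widetilde{F}$ satisfies $(A)$. Moreover $Colev_{<^l}(\widetilde{F},B)=C\cap Colev_{<^l}(F,B)$ is closed in $C$ for every $B\subseteq Y$, so $\widetilde{F}$ is $l$-colevel closed and therefore $q$-srgi by Corollary \ref{cor2}. Since $C$ is compact and $x_0\in C$ gives $M_{\widetilde{F}}^q=\inf_C\Psi_F\leq\Psi_F(x_0)<+\infty$, every $\lambda>M_{\widetilde{F}}^q$ makes $Colev_{<^l}(\widetilde{F},\lambda q)\subseteq C$ relatively compact, so $\widetilde{F}$ satisfies $q$-sgicc. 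Finally $\widetilde{F}(C)=F\bigl(Colev_{<^l}(F,F(x_0))\bigr)$ is $P$-bounded by hypothesis. Applying Proposition \ref{first theorem} to $\widetilde{F}$ on $C$ gives $l$-$SWEff(\widetilde{F})\neq\emptyset$.

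It then remains to identify $l$-$SWEff(F)$ with $l$-$SWEff(\widetilde{F})$. For $l$-$SWEff(F)\subseteq C$: if $\bar{x}\in l$-$SWEff(F)=l$-$WEff(F)$ had $\bar{x}\notin C$, i.e. $F(x_0)<^l F(\bar{x})$, then $F(\bar{x})<^l F(x_0)$, and the argument in the proof of Proposition \ref{relation with scalarization}, applied at both $x_0$ and $\bar{x}$, forces simultaneously $\Psi_F(x_0)>\Psi_F(\bar{x})$ and $\Psi_F(\bar{x})>\Psi_F(x_0)$, which is impossible. In particular $l$-$SWEff(F)$ is bounded and $l$-$SWEff(F)\subseteq l$-$SWEff(\widetilde{F})$. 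Conversely, let $\bar{x}\in l$-$SWEff(\widetilde{F})\subseteq C$ and suppose some $x_1\neq\bar{x}$ in $X$ satisfies $F(x_1)<^l F(\bar{x})$; if $F(x_0)<^l F(x_1)$ held, then $\mathrm{int}P+\mathrm{int}P\subseteq\mathrm{int}P$ would give $F(x_0)<^l F(\bar{x})$, contradicting $\bar{x}\in C$, so $F(x_0)\nless^l F(x_1)$, i.e. $x_1\in C$ --- contradicting $\bar{x}\in l$-$SWEff(\widetilde{F})$. Hence $l$-$SWEff(F)=l$-$SWEff(\widetilde{F})\neq\emptyset$.

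Finally, for compactness of $l$-$SWEff(F)$, since $l$-$SWEff(F)\subseteq C$ and $C$ is compact it suffices to show $l$-$SWEff(F)$ is closed. Take $\bar{x}_k\in l$-$SWEff(F)$ with $\bar{x}_k\to\bar{x}$ (possible in the metric space $C$) and suppose $\bar{x}\notin l$-$SWEff(F)=l$-$WEff(F)$; then there is $x_1\neq\bar{x}$ with $F(x_1)<^l F(\bar{x})$, i.e. $\bar{x}\in\bigl(Colev_{<^l}(F,F(x_1))\bigr)^c$, which is open because $F$ is $l$-colevel closed. Hence $\bar{x}_k$ lies in this open set for all large $k$, so $F(x_1)<^l F(\bar{x}_k)$, while $\bar{x}_k\neq x_1$ for large $k$ (since $\bar{x}_k\to\bar{x}\neq x_1$); this contradicts $\bar{x}_k\in l$-$SWEff(F)$. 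Thus $l$-$SWEff(F)$ is closed, hence compact. The two points I expect to require the most care are the realization that one must pass to the restriction $\widetilde{F}$ so that the $P$-boundedness hypothesis --- assumed only on $F(Colev_{<^l}(F,F(x_0)))$, not on all of $F(X)$ --- can feed into Proposition \ref{first theorem}, and the bookkeeping that glues $l$-$SWEff(\widetilde{F})$ back to $l$-$SWEff(F)$ and extracts closedness from the $l$-colevel closedness of $F$.
\qed
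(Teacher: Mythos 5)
Your proof is correct and takes essentially the same route as the paper: restrict $F$ to the compact set $C=Colev_{<^l}(F,F(x_0))$, apply the coercive Weierstrass-type result (Proposition \ref{first theorem}) to the restriction, and transfer the conclusion back. The only difference is that you write out in full the two steps the paper delegates to citations, namely the inclusion $l$-$SWEff(F|_C)\subseteq l$-$SWEff(F)$ (attributed there to the argument of \cite[Theorem 3]{HL}) and the closedness of $l$-$SWEff(F)$ (obtained there from the formula $l$-$SWEff(F)=\bigcap_{x\in X}Colev_{<^l}(F,F(x))$).
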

\begin{proof}
Let $K=Colev_{<^l}(F, F(x_0))$ and let $F|_K: K\rightrightarrows \Bbb R^m$. By Corollary \ref{cor. w-closed bd val.}, Corollary \ref{cor2} and Corollary \ref{cor4}, $l$-$SWEff(F|_K)$ is a nonempty set. Similarly to the proof of \cite[Theorem 3]{HL}, we have $l$-$SWEff(F|_K)\subseteq l$-$SWEff(F)$, then $l$-$SWEff(F)$ is a nonempty set. Also, since $F$ is $l$-colevel closed and $Colev_{<^l}(F, F(x_0))$ is a bounded set, by $l$-$SWEff(F)=\cap_{x\in X}Colev_{<^l}(F, F(x))$, \cite{CFFH2023}, $l$-$SWEff(F)$ is compact.
\qed
\end{proof}
By the following example we see that, Proposition \ref{first theorem} is not consequence of the results in \cite{HR07,KKS17}.
\begin{example}
Suppose that $X=\Bbb R$ and $Y=\Bbb R^2$ are equipped with the usual norm topology, $P=\{(x,y)\in\Bbb R^2: -x\leq y\leq x\}$ and $q=(1,0)$. Let $A=\{(x,y)\in \Bbb R^2: x\geq 0, 0\leq y\leq 2\}\setminus \{(0,0)\}$ and let $F:\Bbb R\rightrightarrows \Bbb R^2$ be defined by
	$$F(x)=\left\{
	\begin{array}{ll}
		A,& x=0,\\
		(1,0),& x\neq 0.\\
	\end{array}\right.$$
Therefore
$$\Psi_F(x)=\left\{
	\begin{array}{ll}
		-2,& x=0,\\
		1,& x\neq 0,\\
	\end{array}\right.$$
and $Colev_{<^l}(F, \frac{1}{2}q)=\{0\}$. Then, we can see, $F$ satisfies all the conditions of Proposition \ref{first theorem} and $0\in l$-$SWEff(F)$. But, $A$ is not a $P$-closed set. Hence $F$ is not a $P$-closed-valued map. Then, we can not apply the results in \cite{HR07,KKS17}.
\end{example}

Now, in the following we extend the notion of asymptotic functions in Definition \ref{new asymptotic} to set-valued maps and by using this concept we present new optimality conditions for the noncoercive (SOP). Since the $qx$-asymptotic function is different from other asymptotic functions, then the following asymptotic function is different from other asymptotic functions in the set-valued case.\\
In the sequel, suppose that $K$ is a nonempty subset of the Banach space $(E,\|.\|)$, $B_E$ is the unit closed ball in $(E,\|.\|)$ and for each $n\in \Bbb N$, $K_n=\{x\in K: \|x\|\leq n\}$.
\begin{definition}\label{Def5.2}
For the set-valued map $F: K\rightrightarrows Y$, we define the  $qx$-$\sigma$-asymptotic function $F^{G,\infty}_{\sigma}:K_{\sigma}^{\infty}\rightarrow \Bbb R \cup\{\pm\infty\}$, as follows:
$$F^{G,\infty}_{\sigma}(u)=\inf\{\lambda: u\in (Colev_{<^l}(F,\lambda q))^{\infty}_{\sigma}\}.$$
\end{definition}
\begin{remark}\label{R1}
By Proposition \ref{scalarization} we can see, if $F: K\rightrightarrows Y$ satisfies assumption $(A)$, then $F^{G,\infty}_{\sigma}(u)=(\Psi_F)^{\sigma g}(u)$ for all $u\in K_{\sigma}^{\infty}$.  Therefore by \cite[Proposition 2]{FHS}, if $F: K\rightrightarrows Y$ satisfies assumption $(A)$, then $F^{G,\infty}_{\sigma}(u)=\inf\{\liminf_{n\to\infty}\Psi_F(t_nd_n): t_n\to +\infty, d_n \xrightarrow{\sigma} u\}$ for all $u\in K_{\sigma}^{\infty}$. 
	\end{remark}
In the following example, we illustrate Definition \ref{Def5.2}.
\begin{example}
Suppose that $E=Y=\Bbb R$ are equipped with the usual norm topology, $\sigma=s$ is the usual norm topology, $P=\Bbb R_+$ and $q=1$. Let $F: \Bbb R\rightrightarrows \Bbb R$ be defined by
		$$F(x)=\left\{
		\begin{array}{ll}
				[x, x+1],& x\geq 0,\\
	
			[-1, 0],& x< 0.\\
			\end{array}\right.$$
Thus we have $M_F^q=-1$, and we can see that
		$$Colev_{<^l}(F,\lambda q)=\left\{
		\begin{array}{ll}
			\{x:~x< 0\},& -1<\lambda<0,\\
			\{x:~x< 0\}\cup \{x:~x\geq \lambda\},& \lambda\geq 0.
		\end{array}\right.$$
Therefore
$$(Colev_{<^l}(F,\lambda q))^\infty_{s}=\left\{
		\begin{array}{ll}
			\{x:~x\leq 0\},& -1<\lambda<0,\\
			\Bbb R,& \lambda\geq 0.
		\end{array}\right.$$	
Then
$$F^G_{s}(u)=\left\{
		\begin{array}{ll}
				0,& u> 0,\\
			-1,& u\leq 0.\\
			\end{array}\right.$$
\end{example}
In the following proposition we have some properties of this asymptotic function.
\begin{proposition}\label{P. property qr}
For the set-valued map $F: K\rightrightarrows Y$ we have
\begin{itemize}
\item[(i)] If $F$ satisfies assumption $(A)$, then $\inf F^{G,\infty}_{\sigma}(u)=F^{G,\infty}_{\sigma}(0)=M_F^q$.
\item[(ii)] $F^{G,\infty}_{\sigma}$ is positively homogeneous of degree zero, that is, $F^{G,\infty}_{\sigma}(tu)=F^{G,\infty}_{\sigma}(u)$ for all $t>0$ and $u\in K_{\sigma}^{\infty}$.

Moreover,
\item[(iii)] Let $F_1,F_2:K\rightrightarrows Y$ be two set-valued maps with nonempty values. Then
		\begin{center}
			$F_1\leq^l F_2$ implies that $(F_1)^G_{\sigma}\leq (F_2)^G_{\sigma}$.
		\end{center}
\end{itemize}
\end{proposition}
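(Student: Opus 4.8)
The plan is to prove the three items separately, in each case reducing the set-valued statement to a scalar statement about $\Psi_F$ via the identity $Colev_{<^l}(F,\lambda q)=[\Psi_F\leq\lambda]$ from Proposition \ref{scalarization} (valid under assumption $(A)$) and Remark \ref{R1}, which tells us $F^{G,\infty}_{\sigma}=(\Psi_F)^{\sigma g}$ on $K^\infty_\sigma$.

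For (i), I would invoke Remark \ref{R1} to replace $F^{G,\infty}_{\sigma}$ by $(\Psi_F)^{\sigma g}$, so the claim becomes $\inf (\Psi_F)^{\sigma g}=(\Psi_F)^{\sigma g}(0)=M_F^q$. Since $M_F^q=\inf\Psi_F$ by the discussion preceding Definition \ref{rgi set valued}, this is exactly the scalar fact that the $\sigma g$-asymptotic function attains its infimum at $0$ with value $\inf\Psi_F$; I expect this to follow directly from \cite[Proposition 2]{FHS} (or the defining formula in Definition \ref{new asymptotic}): for $\lambda\geq\inf\Psi_F$ the sublevel set $[\Psi_F\leq\lambda]$ is nonempty, hence contains sequences with bounded norm, so $0\in([\Psi_F\leq\lambda])^\infty_\sigma$, giving $(\Psi_F)^{\sigma g}(0)\leq\inf\Psi_F$; conversely no $\lambda<\inf\Psi_F$ can work since then $[\Psi_F\leq\lambda]=\emptyset$ and $\emptyset^\infty_\sigma=\emptyset$.

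For (ii), positive homogeneity of degree zero, I would argue directly from Definition \ref{Def5.2} without needing assumption $(A)$: for any $t>0$, a cone $A^\infty_\sigma$ is itself a cone (it is closed under multiplication by positive scalars, which is immediate from the defining formula by rescaling the sequence $t_k$), hence $u\in(Colev_{<^l}(F,\lambda q))^\infty_\sigma$ if and only if $tu\in(Colev_{<^l}(F,\lambda q))^\infty_\sigma$; taking the infimum over the same set of admissible $\lambda$ yields $F^{G,\infty}_{\sigma}(tu)=F^{G,\infty}_{\sigma}(u)$.

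For (iii), I would unwind $F_1\leq^l F_2$, i.e. $F_2(x)\subseteq F_1(x)+P$ for all $x\in K$, and show it forces the inclusion $Colev_{<^l}(F_1,\lambda q)\subseteq Colev_{<^l}(F_2,\lambda q)$ for every $\lambda$: if $x\in Colev_{<^l}(F_1,\lambda q)$, then $\lambda q\nless^l F_1(x)$, i.e. $F_1(x)\nsubseteq\lambda q+\mathrm{int}P$; since $F_2(x)\subseteq F_1(x)+P$ and $(\lambda q+\mathrm{int}P)+P=\lambda q+\mathrm{int}P$, one checks $F_2(x)\nsubseteq\lambda q+\mathrm{int}P$ as well (if $F_2(x)$ were inside $\lambda q+\mathrm{int}P$ then so would be $F_1(x)+P\supseteq F_1(x)$, a contradiction). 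Monotonicity of the asymptotic-cone operation under inclusion then gives $(Colev_{<^l}(F_1,\lambda q))^\infty_\sigma\subseteq(Colev_{<^l}(F_2,\lambda q))^\infty_\sigma$, so the set of admissible $\lambda$ for $F_2$ contains that for $F_1$, and taking infima reverses to $(F_1)^G_\sigma\leq(F_2)^G_\sigma$. The main subtlety throughout is the bookkeeping with $\mathrm{int}P$ and the set-arithmetic identity $(\lambda q+\mathrm{int}P)+P=\lambda q+\mathrm{int}P$ in item (iii); everything else is a direct transcription of scalar facts already established in the excerpt.
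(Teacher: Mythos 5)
Parts (i) and (ii) are correct and follow essentially the same route as the paper: (i) reduces to $(\Psi_F)^{\sigma g}(0)=\inf\Psi_F$ via Remark \ref{R1} together with the fact that $0$ lies in the $\sigma$-asymptotic cone of every nonempty set and that $\emptyset^\infty_\sigma=\emptyset$; (ii) is the observation that each $(Colev_{<^l}(F,\lambda q))^\infty_\sigma$ is a cone. (Minor nitpick in (i): $[\Psi_F\le\lambda]$ need not be nonempty at $\lambda=\inf\Psi_F$ itself, but you only need nonemptiness for every $\lambda>\inf\Psi_F$, which your argument in fact uses.)

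Part (iii), however, contains a genuine error. The inclusion you claim, $Colev_{<^l}(F_1,\lambda q)\subseteq Colev_{<^l}(F_2,\lambda q)$, goes the wrong way, and the parenthetical justification misuses the hypothesis: from $F_2(x)\subseteq F_1(x)+P$ and $F_2(x)\subseteq\lambda q+\mathrm{int} P$ you cannot conclude anything about $F_1(x)+P$, since $F_1(x)+P$ \emph{contains} $F_2(x)$ rather than being contained in it. The correct implication runs in the other direction: if $F_1(x)\subseteq\lambda q+\mathrm{int} P$, then $F_2(x)\subseteq F_1(x)+P\subseteq\lambda q+\mathrm{int} P+P=\lambda q+\mathrm{int} P$, which gives $Colev_{<^l}(F_2,\lambda q)\subseteq Colev_{<^l}(F_1,\lambda q)$ --- this is what the paper uses. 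A one-dimensional check: $Y=\mathbb{R}$, $P=\mathbb{R}_+$, $q=1$, $F_1\equiv\{0\}$, $F_2\equiv\{1\}$; then $F_1\le^l F_2$ but $Colev_{<^l}(F_1,\tfrac{1}{2}q)=X\nsubseteq\emptyset=Colev_{<^l}(F_2,\tfrac{1}{2}q)$. You then make a second sign error: if the admissible $\lambda$-set for $F_2$ contained that for $F_1$, taking infima would give $(F_2)^{G}_{\sigma}\le(F_1)^{G}_{\sigma}$, not the reverse. The two errors cancel to produce the stated conclusion, but the proof as written is not valid. The fix is short: establish $Colev_{<^l}(F_2,\lambda q)\subseteq Colev_{<^l}(F_1,\lambda q)$ as above, pass to asymptotic cones (monotone under inclusion), observe that the admissible $\lambda$-set for $F_2$ at $u$ is then contained in that for $F_1$, and conclude $(F_1)^{G}_{\sigma}(u)\le(F_2)^{G}_{\sigma}(u)$.
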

\begin{proof}
$(i)$ If $Colev_{<^l}(F,\lambda q)\neq\emptyset$, then $0\in (Colev_{<^l}(F,\lambda q))^\infty_{\sigma}$. Thus, $F^{G,\infty}_{\sigma}(u)\geq F^{G,\infty}_{\sigma}(0)$ for all $u\in K_{\sigma}^{\infty}$ and therefore, $\inf F^{G,\infty}_{\sigma}(u)=F^{G,\infty}_{\sigma}(0)$. Also, by Remark \ref{R1} we obtain $(\Psi_F)^{\sigma g}(0)=F^{G,\infty}_{\sigma}(0)$. On the other hand, by Definition \ref{new asymptotic}, $(\Psi_F)^{\sigma g}(0)\geq M_F^q$. Since for each $\lambda>M_F^q$, $Colev_{<^l}(F,\lambda q)\neq\emptyset$, we have $0\in (Colev_{<^l}(F,\lambda q))^\infty_{\sigma}$, thus $(\Psi_F)^{\sigma g}(0)\leq M_F^q$. Hence $M_F^q=(\Psi_F)^{\sigma g}(0)$. Then, we conclude that $\inf F^{G,\infty}_{\sigma}(u)=F^{G,\infty}_{\sigma}(0)=M_F^q$.\\
$(ii)$ Since $(Colev_{<^l}(F,\lambda q))^\infty_{\sigma}=\frac{1}{t}(Colev_{<^l}(F,\lambda q))^\infty_{\sigma}$ for all $t>0$. Thus, $F^{G,\infty}_{\sigma}(tu)=F^{G,\infty}_{\sigma}(u)$ for all $t>0$ and $u\in K_{\sigma}^{\infty}$.\\
$(iii)$ Let $F_1\leq^l F_2$, thus for each $\lambda\in \Bbb R$ we have $Colev_{<^l}(F_2,\lambda q)\subseteq Colev_{<^l}(F_1,\lambda q)$. Then $(F_1)^G_{\sigma}\leq (F_2)^G_{\sigma}$.\\
\qed
\end{proof}
In order to obtain the second main result of this paper, we need the following theorem, which is a corollary of \cite[Theorem 2]{FHS}. 
\begin{theorem}\label{main theorem in minimization}
 Let $K$ be $\sigma$-closed and convex and let $f: K\to\Bbb R$ be bounded from below and $\sigma$-rgi (rgi with respect to the $\sigma$ topology) on $K_n$ for all $n\in \Bbb N$. Suppose that $B_E$ is $\sigma$-closed and sequentially compact with respect to the $\sigma$ topology and for every $u\in K_{\sigma}^{\infty}\setminus \{0\}$, $f^{\sigma g}(u)>f^{\sigma g}(0)$. Then $f$ has a minimum  and $(\arg \min(f))^\infty_{\sigma}=\{0\}$, if the following condition holds.
\begin{itemize}
\item[$(K_{\sigma m})$] If there exists an unbounded sequence $(x_n)$ such that for each $n\in \Bbb N$, $x_n\in \arg \min(f{|_{K_n}})$, then there exist $(n_k)\subseteq (n)$, $(w_k)\subseteq K$ and $d\in K_{\sigma}^{\infty}\setminus \{0\}$ such that for each $k\in \Bbb N$, $w_k\in \arg \min(f{|_{K_{n_k}}})$ and $\frac{w_k}{\|w_k\|}\xrightarrow{\sigma} d$.
\end{itemize}
\end{theorem}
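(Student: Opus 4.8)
The plan is to read the result off from \cite[Theorem 2]{FHS}: the present hypotheses are arranged so that they fall under the hypotheses of that theorem, and I would verify this fit and invoke it. Since the argument is short, I would also record it directly, as follows.

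First I would exhaust $K$ by the slices $K_n=\{x\in K:\|x\|\le n\}$. Each $K_n$ is $\sigma$-closed, being the intersection of the $\sigma$-closed set $K$ with $nB_E$, and it is sequentially compact for $\sigma$, being a $\sigma$-closed subset of the $\sigma$-sequentially compact set $nB_E$. As $f|_{K_n}$ is bounded from below and $\sigma$-rgi, the rgi Weierstrass theorem in the sequential $\sigma$-compact setting (cf.\ \cite[Theorem 16]{A1} and \cite{FHS}) yields $\arg\min(f|_{K_n})\neq\emptyset$; fix $x_n$ in it. Since $K_n\subseteq K_{n+1}$ and $\bigcup_n K_n=K$, the numbers $f(x_n)=\inf_{K_n}f$ form a nonincreasing sequence converging to $m:=\inf_K f\in\Bbb R$.

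Next I would argue by cases on $(x_n)$. If $(x_n)$ is bounded, say $\|x_n\|\le N$ for all $n$, then $x_n\in K_N$ for every $n$, so $m\le f(x_N)=\inf_{K_N}f\le f(x_n)\to m$, which forces $f(x_N)=m$, i.e.\ $x_N\in\arg\min f$. If $(x_n)$ is unbounded, then condition $(K_{\sigma m})$ provides a subsequence $(n_k)$ and points $w_k\in\arg\min(f|_{K_{n_k}})$ with $\|w_k\|\to+\infty$ and $w_k/\|w_k\|\xrightarrow{\sigma}d$ for some $d\in K_\sigma^\infty\setminus\{0\}$. For each $\lambda>m$ we have $f(w_k)=\inf_{K_{n_k}}f<\lambda$ for all large $k$, hence $w_k\in[f\leq\lambda]$, so $d\in([f\leq\lambda])^\infty_\sigma$ by the definition of the $\sigma$-asymptotic cone, and thus $f^{\sigma g}(d)\le\lambda$; letting $\lambda\downarrow m$ gives $f^{\sigma g}(d)\le m$. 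On the other hand $f^{\sigma g}(0)=m$, because $[f\leq\lambda]$ is nonempty for $\lambda>m$ (so $0\in([f\leq\lambda])^\infty_\sigma$ for such $\lambda$) and empty for $\lambda<m$. Hence $f^{\sigma g}(d)\le f^{\sigma g}(0)$ with $d\neq 0$, contradicting the hypothesis $f^{\sigma g}(u)>f^{\sigma g}(0)$ for all $u\in K_\sigma^\infty\setminus\{0\}$. So $(x_n)$ must be bounded and, by the first case, $f$ attains its minimum.

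Finally, for $(\arg\min f)^\infty_\sigma=\{0\}$, put $S=\arg\min f=[f\leq m]$; this is nonempty by the previous step, so $0\in S^\infty_\sigma$. If $d\in S^\infty_\sigma\setminus\{0\}$, then since $S\subseteq[f\leq\lambda]$ for every $\lambda>m$ we get $d\in([f\leq\lambda])^\infty_\sigma$, whence $f^{\sigma g}(d)\le m=f^{\sigma g}(0)$, the same contradiction. The step I expect to be the main obstacle is the careful use of $(K_{\sigma m})$ --- especially checking that the escaping minimizers it produces genuinely satisfy $\|w_k\|\to+\infty$, so that $d$ lands in the relevant sublevel-set asymptotic cones --- together with the two routine-but-necessary bookkeeping facts that $f^{\sigma g}(0)=\inf_K f$ and that the rgi Weierstrass theorem applies on each $K_n$ under mere sequential $\sigma$-compactness.
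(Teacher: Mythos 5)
Your argument is essentially the paper's: the paper first produces a minimizer of $f|_{K_m}$ for each $m$ (sequential $\sigma$-compactness of $K_m$ plus the rgi property, via \cite[Theorem 2]{A1}), notes these sets are $\sigma$-closed, and then outsources the rest to \cite[Theorem 2]{FHS}; what you have written is, in effect, the internal dichotomy argument of that cited theorem, made self-contained. The bounded/unbounded case split, the computation $f^{\sigma g}(0)=\inf_K f$, and the treatment of $(\arg\min f)^\infty_\sigma$ are all correct.

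The one loose end is exactly the point you flag: as stated, $(K_{\sigma m})$ does not assert $\|w_k\|\to+\infty$, and without that you cannot place $d$ in $([f\leq\lambda])^\infty_\sigma$ by taking $t_k=\|w_k\|$. This is easily closed by refining your dichotomy: if $(w_k)$ has a bounded subsequence, say contained in $K_N$, then $\inf_{K_N}f\leq f(w_{k_j})=\inf_{K_{n_{k_j}}}f\to\inf_K f$, so the (already known to exist) minimizer of $f|_{K_N}$ is a global minimizer and you are done without any asymptotic argument; otherwise $\|w_k\|\to+\infty$ and your contradiction via $f^{\sigma g}(d)\leq f^{\sigma g}(0)$ applies. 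With that sentence inserted, the proof is complete and, modulo the black-boxing of \cite[Theorem 2]{FHS}, coincides with the paper's.
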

\begin{proof}
Let $m\in\Bbb N$ and $(x_n)\subseteq K_m$ be a minimizing sequence for $f{|_{K_m}}$. Since $B_E$ is sequentially compact with respect to the $\sigma$ topology, there exist $(x_{n_k})\subseteq (x_n)$ and $x_0\in K_m$ such that $x_{n_k}\to x_0$. Hence by \cite[Theorem 2]{A1}, $x_0\in \arg \min(f{|_{K_m}})$ and $\arg \min(f{|_{K_m}})$ is a nonempty set. Also, by \cite[Theorem 11]{A1}, $\arg \min(f{|_{K_m}})$ is $\sigma$-closed. Then by the similar proof of \cite[Theorem 3]{FHS}, we conclude the result by \cite[Theorem 2]{FHS}.
\qed 
\end{proof}

In the following, as another main result of this paper, we have an existence result for (SOP) with some noncoercive conditions.
\begin{theorem}\label{thm1}
Suppose that $K$ is $\sigma$-closed and convex. Let $F: K\rightrightarrows Y$ satisfies assumption $(A)$ and let $B_E$ be $\sigma$-closed and sequentially compact with respect to the $\sigma$ topology. Suppose that $F(X)$ is $P$-bounded and for every $n\in\Bbb N$, $F$ is $\sigma q$-srgi ($q$-srgi with respect to the $\sigma$ topology) on $K_n$ and
\begin{equation}\label{12}
 F^{G,\infty}_{\sigma}(u)>F^{G,\infty}_{\sigma}(0),~~~~~ for~ all ~u\in K_{\sigma}^{\infty}\setminus \{0\}.
\end{equation}
Then $l$-$SWEff(F)$ is a nonempty subset of $K$, if the following condition holds.
\begin{itemize}
\item[$(K_q^{set})$] For any $(\lambda_n)\subseteq \Bbb R$ and any unbounded sequence $(x_n)\subseteq K$ that satisfy $\lambda_n\downarrow M_F^q$, $x_n\in Colev_{<^l}(F,\lambda_nq)\bigcap K_n$ for each $n\in\Bbb N$, and $\|x_n\|\rightarrow +\infty$, there exist $(x_{n_k} )\subseteq (x_n)$, $(w_k)\subseteq K$ and $d\in K_{\sigma}^{\infty}\setminus \{0\}$
such that $F(w_k)\leq^lF(x_{n_k} )$ and $\|w_k\|\leq\|x_{n_k}\|$ for $k$ large enough, and $\frac{w_k}{\|w_k\|}\xrightarrow{\sigma} d.$
	\end{itemize}
\end{theorem}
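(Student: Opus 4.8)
The plan is to reduce (SOP) to the scalar minimization of $\Psi_F$ on $K$ and then apply Theorem \ref{main theorem in minimization} to $f:=\Psi_F$. First I would record that all the standing hypotheses of that theorem hold for $f$: since $F$ satisfies assumption $(A)$, $f:K\to\Bbb R$ is finite (as in the proof of Proposition \ref{relation with scalarization}); since $F(X)$ is $P$-bounded, Remark \ref{Rem.scal} and \cite[Theorem 3.6]{HR07} give that $f$ is bounded from below, exactly as in the proof of Proposition \ref{first theorem}; by the $\sigma$-topological counterpart of Proposition \ref{Prop.rgi} applied on each $K_n$ (using $\Psi_{F|_{K_n}}=\Psi_F|_{K_n}$), the assumption that $F$ is $\sigma q$-srgi on $K_n$ says that $f|_{K_n}$ is $\sigma$-rgi for every $n\in\Bbb N$; and by Remark \ref{R1} we have $f^{\sigma g}=F^{G,\infty}_{\sigma}$ on $K_{\sigma}^{\infty}$, so (\ref{12}) is precisely the separation condition $f^{\sigma g}(u)>f^{\sigma g}(0)$ for all $u\in K_{\sigma}^{\infty}\setminus\{0\}$. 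With $K$ being $\sigma$-closed and convex and $B_E$ being $\sigma$-closed and sequentially compact with respect to $\sigma$, everything in Theorem \ref{main theorem in minimization} is in place except its condition $(K_{\sigma m})$, which I would deduce from $(K_q^{set})$.

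For that deduction, let $(x_n)$ be an unbounded sequence with $x_n\in\arg \min(f|_{K_n})$ for every $n$, and set $\lambda_n:=f(x_n)=\inf f(K_n)$. Because the $K_n$ are nested with $\bigcup_n K_n=K$, the sequence $(\lambda_n)$ is nonincreasing and $\lambda_n\downarrow\inf f(K)=\inf\Psi_F=M_F^q$. By Proposition \ref{scalarization}, $[\Psi_F\leq\lambda_n]=Colev_{<^l}(F,\lambda_n q)$, so $x_n\in Colev_{<^l}(F,\lambda_n q)\cap K_n$ and $\|x_n\|\to+\infty$; hence $(K_q^{set})$ applies and produces $(x_{n_k})\subseteq(x_n)$, $(w_k)\subseteq K$ and $d\in K_{\sigma}^{\infty}\setminus\{0\}$ with $F(w_k)\leq^lF(x_{n_k})$, $\|w_k\|\leq\|x_{n_k}\|$ for $k$ large, and $\frac{w_k}{\|w_k\|}\xrightarrow{\sigma}d$.

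To conclude, I would use that $\Psi_F$ is monotone for $\leq^l$: if $B\subseteq A+P$ then, as $\psi^q_P$ is $\leq$-increasing, $\inf_{z\in A}\psi^q_P(z)\leq\inf_{z\in B}\psi^q_P(z)$; applied to $F(w_k)\leq^lF(x_{n_k})$ this gives $\Psi_F(w_k)\leq\Psi_F(x_{n_k})=\lambda_{n_k}$. On the other hand $\|w_k\|\leq\|x_{n_k}\|\leq n_k$ forces $w_k\in K_{n_k}$, whence $\Psi_F(w_k)\geq\inf f(K_{n_k})=\lambda_{n_k}$. So $\Psi_F(w_k)=\lambda_{n_k}$, i.e. $w_k\in\arg \min(f|_{K_{n_k}})$, and together with $\frac{w_k}{\|w_k\|}\xrightarrow{\sigma}d\neq0$ this is exactly $(K_{\sigma m})$. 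Therefore Theorem \ref{main theorem in minimization} gives $\arg \min(\Psi_F)\neq\emptyset$, and Proposition \ref{relation with scalarization} yields $\arg \min(\Psi_F)\subseteq l$-$SWEff(F)$; since $\arg \min(\Psi_F)\subseteq K$, we obtain that $l$-$SWEff(F)$ is a nonempty subset of $K$.

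I expect the delicate point to be this last step: showing that the dominating points $w_k$ furnished by $(K_q^{set})$ are in fact minimizers of $f$ over $K_{n_k}$, which rests on the two-sided squeeze $\lambda_{n_k}=\inf f(K_{n_k})\leq\Psi_F(w_k)\leq\Psi_F(x_{n_k})=\lambda_{n_k}$ and therefore on both the $\leq^l$-monotonicity of $\Psi_F$ and the exact identification $\lambda_n=\inf f(K_n)$ with nested $K_n$. A minor, routine check is the passage from Proposition \ref{Prop.rgi} to its relative-$\sigma$-topology version on each $K_n$.
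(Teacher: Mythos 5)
Your proposal is correct and follows essentially the same route as the paper: scalarize via $\Psi_F$, verify the hypotheses of Theorem \ref{main theorem in minimization} (boundedness below, $\sigma$-rgi on each $K_n$ via Proposition \ref{Prop.rgi}, the asymptotic condition via Remark \ref{R1}), deduce $(K_{\sigma m})$ from $(K_q^{set})$ using the $\leq^l$-monotonicity of $\Psi_F$ and the norm bound $\|w_k\|\leq\|x_{n_k}\|$, and finish with Proposition \ref{relation with scalarization}. The only cosmetic difference is that you take $\lambda_n=\inf\Psi_F(K_n)$ exactly, whereas the paper picks $\lambda_n$ slightly above $\Psi_F(x_n)$ with $\lambda_n\downarrow M_F^q$; both choices feed $(K_q^{set})$ equally well.
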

\begin{proof}
At first, we show that $\Psi_F: K\to \Bbb R$ satisfies condition $(K_{\sigma m})$. For the proof of this purpose, let $(x_n)$ be an unbounded sequence such that for each $n\in \Bbb N$, $x_n\in \arg \min({\Psi_F}{|_{K_n}})$, then there exists $(\lambda_n)\subseteq\Bbb R$ such that $\lambda_n\downarrow M_F^q$ and $\Psi_F(x_n)<\lambda_n$, for each $n\in\Bbb N$. Therefore by Proposition \ref{scalarization}, for every $n\in\Bbb N$ we have $x_n\in Colev_{<^l}(F,\lambda_nq)\bigcap K_n$.\\
Thus, by condition $(K_q^{set})$ there exist $(x_{n_k})\subseteq (x_n)$, $(w_k)\subseteq K$ and $d\in K_{\sigma}^{\infty}\setminus \{0\}$
 such that $F(w_k)\leq^lF(x_{n_k})$ and $\|w_k\|\leq\|x_{n_k}\|$ for $k$ large enough, and $\frac{w_k}{\|w_k\|}\xrightarrow{\sigma} d$. Since $\psi^q_P$ is $\leq$-increasing, we can see that $\Psi_F(w_k)\leq \Psi_F(x_{n_k})$, for $k$ large enough. Indeed, since $F(w_k)\leq^lF(x_{n_k})$, for each $z\in F(x_{n_k})$ there exists $z'\in F(w_k)$ such that $z'\preccurlyeq z$, hence $\psi^q_P(z')\leq \psi^q_P(z)$, and then $\Psi_F(w_k)\leq \Psi_F(x_{n_k})$.\\
Therefore for $k$ large enough, $w_{k}\in \arg \min({\Psi_F}{|_{K_{n_k}}})$ and $\frac{w_k}{\|w_k\|}\xrightarrow{\sigma} d$. Then $\Psi_F$ satisfies condition $(K_{\sigma m})$.\\
Now by Proposition \ref{Prop.rgi}, for all $n\in \Bbb N$, $\Psi_F$ is $\sigma$-rgi on $K_n$ and by Remark \ref{R1} $(\Psi_F)^{\sigma g}(u)>(\Psi_F)^{\sigma g}(0)$ for all $u\in K_{\sigma}^{\infty}\setminus \{0\}$. Thus by Theorem \ref{main theorem in minimization}, $\arg \min (\Psi_F)$ is a nonempty set. Then by Proposition \ref{relation with scalarization}, $l$-$SWEff(F)$ is nonempty.
\qed
\end{proof}
\begin{remark}
Assumption $(K_q^{set})$ is an extension of the well-known assumption proposed by the first time in \cite{BBGT}, in the context of existence results for noncoercive minimization problems. Since it is a weak theoretical assumption and extremely useful for providing existence results in continuous optimization, it was generalized to different convex and nonconvex problems as, multiobjective optimization problems and equilibrium problems among others; see \cite{A,FB,IS} for instance.
\end{remark}
In the following, we have some examples of set-valued maps that satisfy condition $(K_q^{set})$.
\begin{example}\label{exp 6}
\begin{itemize}
\item[(i)] Suppose that $(E,\|.\|)$ is a finite dimensional Banach space, $\sigma=s$ and $F: K\rightrightarrows Y$ is a set-valued map. Then $F$ satisfies condition $(K_q^{set})$. Indeed, let $(\lambda_n)\subseteq \Bbb R$ and $(x_n)\subseteq K$ satisfy $\lambda_n\downarrow M_F^q$, $x_n\in Colev_{<^l}(F,\lambda_nq)\bigcap K_n$ for any $n\in\Bbb N$, and $\|x_n\|\rightarrow +\infty$. Therefore there exist $(x_{n_k})\subseteq (x_n)$ and $d\in K_{s}^{\infty}\setminus \{0\}$ such that $\frac{x_{n_k}}{\|x_{n_k}\|}\xrightarrow{s} d$. It is sufficient to set, for each $k\in\Bbb N$, $w_k=x_{n_k}$.
\item[(ii)] Let $F: K\rightrightarrows Y$ satisfies $q$-sgicc with respect to the $\sigma$ topology, where $\sigma$ is the usual norm or the weak topology on $(E,\|.\|)$. Then $F$ satisfies condition $(K_q^{set})$, because there exists $\lambda>M_F^q$ such that $Colev_{<^l}(F,\lambda q)$ is bounded. Let $(\lambda_n)\subseteq \Bbb R$ and $\lambda_n\downarrow M_F^q$. Then there exists $N\in \Bbb N$ such that for $n\geq N,$ $Colev_{<^l}(F,\lambda_nq)\subseteq Colev_{<^l}(F,\lambda q)$. Thus there exist $M>0$ such that for each $x\in Colev_{<^l}(F,\lambda_nq)$ with $n\geq N$, we have $\|x\|\leq M$. Therefore there is no unbounded sequence $(x_n)$ such that $x_n\in Colev_{<^l}(F,\lambda_nq)\bigcap K_n$, for any $n\in\Bbb N$. Then $F$ satisfies condition $(K_q^{set})$.
\end{itemize}
\end{example}

We have the following proposition about the optimality condition (\ref{12}).
\begin{proposition}\label{Prop horizon}
Suppose that $(E,\sigma)$ is first countable and $F: K\rightrightarrows Y$ satisfies assumption $(A)$. Then $F$ satisfies the optimality condition (\ref{12}) if and only if for each $\lambda_n\downarrow M_F^q$,
$${\limsup}_n^{\sigma,\infty} Colev_{<^l}(F,\lambda_nq)=\{0\}.$$ 
\end{proposition}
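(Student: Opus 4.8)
The plan is to transfer everything to the scalarization $\Psi_F$ and then invoke the known equivalence between the optimality condition $(\Psi_F)^{\sigma g}(u) > (\Psi_F)^{\sigma g}(0)$ for all $u \neq 0$ and the collapse of the $\sigma$-horizon outer limit of sublevel sets, which is exactly the statement of \cite[Proposition 2.3]{FHS} or the analogous lemma cited there for extended-real-valued functions. First I would record, using Remark \ref{R1}, that since $F$ satisfies assumption $(A)$ we have $F^{G,\infty}_{\sigma}(u) = (\Psi_F)^{\sigma g}(u)$ for all $u \in K^{\infty}_{\sigma}$; in particular, by Proposition \ref{P. property qr}(i), $F^{G,\infty}_{\sigma}(0) = M_F^q = \inf \Psi_F$. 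Thus condition (\ref{12}) is literally the condition $(\Psi_F)^{\sigma g}(u) > (\Psi_F)^{\sigma g}(0) = \inf \Psi_F$ for all $u \in K^{\infty}_{\sigma}\setminus\{0\}$.

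Next I would use Proposition \ref{scalarization}: since $F$ satisfies assumption $(A)$, for every $\lambda \in \mathbb{R}$ we have $Colev_{<^l}(F,\lambda q) = [\Psi_F \leq \lambda]$. Hence for any sequence $\lambda_n \downarrow M_F^q$ the sets $Colev_{<^l}(F,\lambda_n q)$ coincide with the sublevel sets $[\Psi_F \leq \lambda_n]$, and the assertion to be proved becomes: (\ref{12}) holds iff ${\limsup}_n^{\sigma,\infty}[\Psi_F \leq \lambda_n] = \{0\}$ for every $\lambda_n \downarrow \inf\Psi_F$. This is now a statement purely about the real-valued function $\Psi_F$ on $K$, and it is precisely the characterization of the $\sigma g$-asymptotic function established in \cite{FHS} (the same circle of ideas used in the proof of Theorem \ref{main theorem in minimization}): $u \in {\limsup}_n^{\sigma,\infty}[\Psi_F \leq \lambda_n]$ for some $\lambda_n \downarrow \inf\Psi_F$ exactly when $(\Psi_F)^{\sigma g}(u) \leq \inf\Psi_F$, i.e. when $(\Psi_F)^{\sigma g}(u) = (\Psi_F)^{\sigma g}(0)$.

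For the forward direction, if (\ref{12}) holds, take any $\lambda_n \downarrow M_F^q$ and any $u \in {\limsup}_n^{\sigma,\infty} Colev_{<^l}(F,\lambda_n q)$; then there are $x_{n_j} \in [\Psi_F \leq \lambda_{n_j}]$ and $t_{n_j} \to +\infty$ with $x_{n_j}/t_{n_j} \xrightarrow{\sigma} u$. For any fixed $\mu > M_F^q$ we have $\lambda_{n_j} \leq \mu$ eventually, so $x_{n_j} \in [\Psi_F \leq \mu]$ eventually, whence $u \in ([\Psi_F \leq \mu])^{\infty}_{\sigma}$; letting $\mu \downarrow M_F^q$ gives $(\Psi_F)^{\sigma g}(u) \leq M_F^q = F^{G,\infty}_{\sigma}(0)$, and since the reverse inequality is Proposition \ref{P. property qr}(i), we get $F^{G,\infty}_{\sigma}(u) = F^{G,\infty}_{\sigma}(0)$, forcing $u = 0$ by (\ref{12}); the inclusion $\{0\} \subseteq {\limsup}_n^{\sigma,\infty}$ is immediate since each $Colev_{<^l}(F,\lambda_n q)$ is nonempty. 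For the converse, suppose (\ref{12}) fails, so there is $u \neq 0$ with $F^{G,\infty}_{\sigma}(u) = M_F^q$, i.e. $(\Psi_F)^{\sigma g}(u) = M_F^q$; then for every $k$ there is $\mu_k < M_F^q + 1/k$ with $u \in ([\Psi_F \leq \mu_k])^{\infty}_{\sigma}$, which by a diagonal argument (here is where first countability of $(E,\sigma)$ is used, to extract a single $\sigma$-convergent sequence from the doubly-indexed family) produces $x_j \in [\Psi_F \leq \lambda_j]$ and $t_j \to +\infty$ with $x_j/t_j \xrightarrow{\sigma} u$ for a suitable $\lambda_j \downarrow M_F^q$, showing $u \in {\limsup}_j^{\sigma,\infty} Colev_{<^l}(F,\lambda_j q)$, so the horizon outer limit is strictly larger than $\{0\}$. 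The main obstacle is the diagonalization in the converse direction: one must choose the enumeration of approximating sequences carefully and use the first-countability of $\sigma$ (a fixed countable neighborhood base at $u$) to guarantee that the diagonal sequence still $\sigma$-converges to $u$ while the associated heights can be arranged to decrease to $M_F^q$; this is routine but is exactly the point where the hypothesis on $(E,\sigma)$ enters.
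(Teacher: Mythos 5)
Your proof is correct and follows essentially the same route as the paper: both reduce the claim via Proposition \ref{scalarization} and Remark \ref{R1} to the assertion that $F^{G,\infty}_{\sigma}(u)=(\Psi_F)^{\sigma g}(u)=M_F^q$ holds exactly when $u$ belongs to a $\sigma$-horizon outer limit of the sublevel sets $[\Psi_F\le\lambda_n]=Colev_{<^l}(F,\lambda_n q)$ for some $\lambda_n\downarrow M_F^q$. The only cosmetic difference is that the paper extracts the witnessing sequences from the $\liminf$ representation of $(\Psi_F)^{\sigma g}$ recorded in Remark \ref{R1} (where the first-countability/diagonalization work is already packaged via \cite[Proposition 2]{FHS}), whereas you perform the diagonal extraction directly from the infimum-over-$\lambda$ definition of $(\Psi_F)^{\sigma g}$.
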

\begin{proof}
Since $F$ satisfies assumption $(A)$, by Remark \ref{R1}, $F^{G,\infty}_{\sigma}(u)=(\Psi_F)^{\sigma g}(u)$ for all $u\in K_{\sigma}^{\infty}$ and by Proposition \ref{P. property qr}(i), $\inf F^{G,\infty}_{\sigma}(u)=F^{G,\infty}_{\sigma}(0)=M_F^q$.\\
Let for each $\lambda_n\downarrow M_F^q$, ${\limsup}_n^{\sigma,\infty} Colev_{<^l}(F,\lambda_nq)=\{0\}$ and there exists $u\in K_{\sigma}^{\infty}$ such that $F^{G,\infty}_{\sigma}(u)=F^{G,\infty}_{\sigma}(0)$. Therefore $(\Psi_F)^{\sigma g}(u)=M_F^q$. Hence by Remark \ref{R1} we have
$$\inf\{\liminf_{n\to\infty}\Psi_F(t_nd_n): t_n\to +\infty, d_n \xrightarrow{\sigma} u\}=M_F^q.$$
Thus for some $\lambda_n\downarrow M_F^q$ there exist $t_n\to +\infty$ and $d_n \xrightarrow{\sigma} u$ such that 
$$\Psi_F(t_nd_n)\leq \lambda_n,~~~~~\mbox{for~all}~n\in\Bbb N.$$
Then by Proposition \ref{scalarization},
$$t_nd_n\in Colev_{<^l}(F,\lambda_nq),~~~~~\mbox{for~all}~n\in\Bbb N.$$
Now, for each $n\in\Bbb N$ we set $x_n=t_nd_n$. Therefore, for each $n\in \Bbb N$, $x_n\in Colev_{<^l}(F,\lambda_nq)$, $t_n\to +\infty$ and $\frac{x_n}{t_n} \xrightarrow{\sigma} u$. Thus 
$$u\in {\limsup}_n^{\sigma,\infty} Colev_{<^l}(F,\lambda_nq)=\{0\}.$$
Then $u=0$, and $F$ satisfies the optimality condition (\ref{12}).\\
For the converse, let $F$ satisfies the optimality condition (\ref{12}). Therefore
$$(\Psi_F)^{\sigma g}(u)>M_F^q,~~~\mbox{for~all}~u\in K_{\sigma}^{\infty}\setminus\{0\}.$$
Suppose that $\lambda_n\downarrow M_F^q$ and $u\in {\limsup}_n^{\sigma,\infty} Colev_{<^l}(F,\lambda_nq)$. Thus there exist $x_{n_j}\in Colev_{<^l}(F,\lambda_{n_j}q)$ and $t_{n_j}\to +\infty$ such that $\frac{x_{n_j}}{t_{n_j}}\xrightarrow{\sigma} u$. If for each $j\in\Bbb N$, we set $d_j=\frac{x_{n_j}}{t_{n_j}}$, then $d_j\xrightarrow{\sigma} u$ and for each $j\in\Bbb N$, $t_{n_j}d_j\in Colev_{<^l}(F,\lambda_{n_j}q)$. Hence, by Proposition \ref{scalarization}, for each $j\in\Bbb N$, $\Psi(t_{n_j}d_j)\leq \lambda_{n_j}$. Thus
$$\liminf_{j\to\infty}\Psi_F(t_{n_j}d_j)\leq M_F^q.$$
Therefore
$$(\Psi_F)^{\sigma g}(u)=\inf\{\liminf_{n\to\infty}\Psi_F(t_nd_n): t_n\to +\infty, d_n \xrightarrow{\sigma} u\}=M_F^q.$$
Then $u=0$ and we conclude that
$${\limsup}_n^{\sigma,\infty} Colev_{<^l}(F,\lambda_nq)=\{0\}.$$
\qed
\end{proof}

By Corollary \ref{cor2}, Theorem \ref{thm1}, Example \ref{exp 6}(i) and Proposition \ref{Prop horizon}, we have the following corollary.
\begin{corollary}
Suppose that $E=\Bbb R^n$ and $Y=\Bbb R^m$ are equipped with the usual norm topology, $\sigma=s$ and $F:\Bbb R^n\rightrightarrows \Bbb R^m$ satisfies assumption $(A)$. Let $F$ be $l$-colevel closed  and for each $\lambda_n\downarrow M_F^q$,
$${\limsup}_n^{s,\infty} Colev_{<^l}(F,\lambda_nq)=\{0\}.$$
If $F(\Bbb R^n)$ is $P$-bounded, then $l$-$SWEff(F)$ is a nonempty set.
\end{corollary}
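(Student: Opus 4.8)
The plan is to check that the hypotheses of Theorem~\ref{thm1} are all in force with $K=\mathbb{R}^n$ and $\sigma=s$, and then read off the conclusion. Several of the requirements are immediate: $\mathbb{R}^n$ is $s$-closed and convex; by the Heine--Borel theorem the closed unit ball $B_{\mathbb{R}^n}$ is compact, hence $s$-closed and sequentially compact with respect to the norm topology; $F$ satisfies assumption $(A)$ and $F(\mathbb{R}^n)$ is $P$-bounded, both by hypothesis. So the only three points that require argument are (a) that $F$ is $\sigma q$-srgi on each $K_j=\{x\in\mathbb{R}^n:\|x\|\le j\}$, (b) the optimality condition \eqref{12}, and (c) condition $(K_q^{set})$.

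For (a), I would first observe that $l$-colevel closedness passes to the restrictions $F|_{K_j}$: since $K_j$ is closed, for every $B\subseteq\mathbb{R}^m$ one has $Colev_{<^l}(F|_{K_j},B)=Colev_{<^l}(F,B)\cap K_j$, which is closed because $F$ is $l$-colevel closed. Hence $F|_{K_j}$ is again $l$-colevel closed, and Corollary~\ref{cor2} yields that $F|_{K_j}$ is $q$-srgi, i.e.\ $\sigma q$-srgi on $K_j$ (as $\sigma=s$). For (b), I would invoke Proposition~\ref{Prop horizon}: the norm topology on $\mathbb{R}^n$ is metrizable and therefore first countable, and $F$ satisfies assumption $(A)$, so the optimality condition $F^{G,\infty}_{s}(u)>F^{G,\infty}_{s}(0)$ for all $u\in(\mathbb{R}^n)^{\infty}_{s}\setminus\{0\}$ is equivalent to the assumed identity $\limsup_n^{s,\infty}Colev_{<^l}(F,\lambda_nq)=\{0\}$ holding for every $\lambda_n\downarrow M_F^q$. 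For (c), since $(\mathbb{R}^n,\|\cdot\|)$ is finite dimensional and $\sigma=s$, Example~\ref{exp 6}(i) shows that every set-valued map on $\mathbb{R}^n$ satisfies condition $(K_q^{set})$, so in particular $F$ does.

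With (a), (b), (c) established and the remaining hypotheses already noted, Theorem~\ref{thm1} applies and gives that $l$-$SWEff(F)$ is a nonempty subset of $\mathbb{R}^n$. The proof is essentially an assembly of previously proved results; the only genuinely nontrivial bookkeeping is the inheritance of $l$-colevel closedness by the restrictions $F|_{K_j}$ in step (a), which is what lets Corollary~\ref{cor2} feed the $\sigma q$-srgi-on-$K_j$ hypothesis of Theorem~\ref{thm1}. Everything else reduces to quoting Heine--Borel, Proposition~\ref{Prop horizon}, and Example~\ref{exp 6}(i).
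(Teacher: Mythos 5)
Your proof is correct and follows essentially the same route as the paper, whose own argument is simply the assembly of Corollary~\ref{cor2}, Theorem~\ref{thm1}, Example~\ref{exp 6}(i) and Proposition~\ref{Prop horizon}. The one detail you add explicitly --- that $l$-colevel closedness passes to the restrictions $F|_{K_j}$, so Corollary~\ref{cor2} yields the $q$-srgi property on each $K_j$ rather than merely on all of $\mathbb{R}^n$ --- is a point the paper leaves implicit, and it is worth spelling out given Remark~\ref{rem 3}(ii).
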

\begin{remark}\label{rem 3}
\begin{itemize}
\item[(i)] It follows that, if $F: E\rightrightarrows Y$ satisfies assumption $(A)$ and $q$-sgicc with respect to the $\sigma$ topology, there exists $\lambda>M_F^q$ such that $Colev_{<^l}(F,\lambda q)$ is $\sigma$-bounded. Hence, by the definition, $(Colev_{<^l}(F,\lambda q))^{\infty}_{\sigma}=0$. Then, by Proposition \ref{P. property qr}(i), $F$ satisfies the optimality condition (\ref{12}).
\item[(ii)] Let $F: E\rightrightarrows Y$ be $\sigma q$-srgi, then it is not necessarily $\sigma q$-srgi on $nB_E$ for every $n\in\Bbb N$, even if, $(E,\|.\|)$ is a finite dimensional Banach space and $\sigma=s$. Therefore Proposition \ref{first theorem} is not a consequence of Theorem \ref{thm1}.\\
For example suppose that $E=Y=\Bbb R$ are equipped with the usual norm topology, $\sigma=s$, $P=\Bbb R_+$ and $q=1$. Let $F:\Bbb R\rightrightarrows \Bbb R$ be defined as follows:
$$F(x)=\left\{
		\begin{array}{ll}
				[1, -x+\frac{1}{2}],~~~~~&x\leq -\frac{1}{2},\\
				
			[\frac{3}{4}x+	\frac{7}{8}, 2],~~~~~&-\frac{1}{2}<x<\frac{3}{2},\\
	
			\{x-\frac{3}{2}\},~~~~&x\geq \frac{3}{2}.\\
			\end{array}\right.$$
	Hence
	$$\Psi_F(x)=\left\{
		\begin{array}{ll}
				1,~~~~~& x\leq -\frac{1}{2},\\
				
			\frac{3}{4}x+	\frac{7}{8},~~~~~& -\frac{1}{2}<x<\frac{3}{2},\\
	
			x-\frac{3}{2},~~~~& x\geq \frac{3}{2}.\\
			\end{array}\right.$$
Therefore $F$ satisfies all the conditions of Proposition \ref{first theorem}. But, since $\Psi_F$ is not $s$-rgi on $B_{\Bbb R}$, thus by Proposition \ref{Prop.rgi}, $F$ is not $sq$-srgi on $B_{\Bbb R}$. Then we can not use Theorem \ref{thm1} for this map.
\end{itemize}
\end{remark}
In the following example we show that there exists a set-valued map $F$ that satisfies the optimality condition (\ref{12}) and the condition $(K_q^{set})$, but it does not satisfy $q$-sgicc. In fact, the map that is defined in the following example satisfies all the conditions of Theorem \ref{thm1}, but we can not use Proposition \ref{first theorem} for this map.
\begin{example}\label{e23}
Suppose that for $p>1$, $E=Y=\ell^p$ are equipped with the usual norm topology, $\sigma=w$ is the weak topology, $P=\ell_+^p:=\{x=(x_n)\in\ell^p:x_n\geq 0, \forall n\in\Bbb N\}$ and $q=(\frac{1}{2^n})$. Let $C:=\{x=(x_n)\in \ell^p:|x_n|\leq 1,~\forall n\in\Bbb N\}$. By \cite{M0}, $C$ is a convex, $w$-closed and an unbounded subset of $\ell^p$, such that $C^{\infty}_{w}=\{0\}$.\\
Let $F:\ell^p\rightrightarrows \ell^p$ be defined by
		$$F(x)=\left\{
		\begin{array}{ll}
				\{(\frac{1}{2^{n-1}})\},& x\in C^c,\\
			\{0\}	,&x=\{0,e_2\},\\
			\{(0,\frac{1}{(m+1)},\frac{1}{(m+1)^2},...)\},& x\in C\cap (mB_{\ell^p}\setminus \{(m-1)B_{\ell^p}\cup \{e_2\}\}),\\
			\end{array}\right.$$
where $m\in\Bbb N$, $0B_{\ell^p}:=\{0\}$ and $e_2=(0,1,0,0,...)$.
It's easy to see that
 $$\Psi_F(x)=\left\{
		\begin{array}{ll}
			0,& x\in C,\\
			2,& x\in C^c.
		\end{array}\right.$$
Therefore we have $M_F^q=0$ and
		$$Colev_{<^l}(F,\lambda q)=\left\{
		\begin{array}{ll}
			C,& 0<\lambda<2,\\
			\ell_p,& \lambda\geq 2.
		\end{array}\right.$$	
Hence by Corollary \ref{Cr. rq closed}, $F$ is a $wq$-srgi map. Also, $F$ does not satisfy $q$-sgicc with respect to the weak topology. But for all $u\in \ell^p\setminus \{0\}$ we have
$$F^G_{w}(u)=\inf\{\lambda: u\in (Colev_{<^l}(F,\lambda q))_{w}^{\infty}\}\geq 2>M_F^q=F^G_{w}(0).$$
Now assume that $(\lambda_n)\subseteq \Bbb R$, $\lambda_n\downarrow M_F^q=0$, $x_n\in Colev_{<^l}(F,\lambda_nq)\bigcap nB_{\ell^p}$, for each $n\in \Bbb N$ and $\|x_n\|\rightarrow +\infty$. Then $x_n\in C$ for all $n\in \Bbb N$. For every $n\in \Bbb N$ we set $w_{n}:=e_2$.
 Thus $F(w_n)\leq^l F(x_n)$
and $\frac{w_n}{\|w_n\|}=e_2\xrightarrow{w} e_2$. Therefore, $F$ satisfies condition $(K_q^{set})$. Then $F$ satisfies all the conditions of Theorem \ref{thm1} and $0\in l$-$SWEff(f)\neq \emptyset$.	
\end{example}

\section{Conclusions}
In this paper we have extended the notions of regular-global-inf function, global-inf-coercive-condition \cite{A1}, and $qx$-asymptotic function \cite{H1}, to set-valued maps. Also, we have introduced a new scalarization function for set-valued maps. As the main results, we have proved two new Weierstrass-type theorems for a noncontinuous set optimization problem. In fact, in the first main theorem we give an existence result of strict weakly $l$-efficient solution of (SOP), when the set-valued map is regular-global-inf and it satisfies global-inf-coercive-condition. Moreover, in the second main result we establish an existence result of strict weakly $l$-efficient solution of (SOP) by using the $qx$-asymptotic function, that is extended to set-valued maps. This contribution improves and extends various existence results in the literature.

\begin{acknowledgements}
The authors would like to thank the associate editor and reviewers for their constructive comments, which helped us to improve the paper.
\end{acknowledgements}



\end{document}